\newtheorem{theorem}{Theorem}[section]
\newtheorem{definition}[theorem]{Definition}
\newtheorem{example}[theorem]{Example}
\newtheorem{lemma}[theorem]{Lemma}
\newtheorem{proposition}[theorem]{Proposition}
\newenvironment{proof}{\par\noindent{\bf Proof \,}}{$\hfill \Box$\par\bigskip}
\begin{document}

\title{On Modules over a G--set}
\author{Mehmet Uc, Mustafa Alkan}
\maketitle

\begin{abstract}
Let $R$ be a commutative ring with unity, $M$ a module over $R$ and let $S$
be a $G$--set for a finite group $G$. We define a set $MS$ to be the set of
elements expressed as the formal finite sum of the form $\sum\limits_{s\in
S}m_{s}s$ where $m_{s}\in M$. The set $MS$ is a module over the group ring $%
RG$ under the addition and the scalar multiplication similar to the $RG$%
--module $MG$ defined by Kosan, Lee and Zhou in \cite{Kosan}. With this
notion, we not only generalize but also unify the theories of both of the
group algebra and the group module, and we also establish some significant
properties of $(MS)_{RG}$. In particular, we describe a method for
decomposing a given $RG$--module $MS$ as a direct sum of $RG$--submodules.
Furthermore, we prove the semisimplicity problem of $(MS)_{RG}$ with regard
to the properties of $M_{R}$, $S$ and $G$.
\end{abstract}

\section{Introduction}

Throughout this paper, $G$ is a finite group with identity element $e$, $R$
is a commutative ring with unity $1$, $M$ is an $R$--module, $RG$ is the
group ring, $H\leq G$ denotes that $H$ is a subgroup of $G$ and $S$ is a $G$%
--set with a group action of $G$ on $S$. If $N$ is an $R$--submodule of $M$,
it is denoted by $N_{R}\leq M_{R}$.

$MS$ denote the set of all formal expression of the form $\dsum\limits_{s\in
S}m_{s}s$ where $m_{s}\in M$ and $m_{s}=0$ for almost every $s$. For
elements $\mu =\dsum\limits_{s\in S}m_{s}s$, $\eta =\dsum\limits_{s\in
S}n_{s}s\in MS$, by writing $\mu =\eta $ we mean $m_{s}=n_{s}$ for all $s\in
S$.

We define the sum in $MS$ componentwise%
\begin{equation*}
\mu +\eta =\dsum\limits_{s\in S}(m_{s}+n_{s})s
\end{equation*}

It is clear that $MS$ is an $R$--module with the sum defined above and the
scalar product of $\dsum\limits_{s\in S}m_{s}s$ by $r\in R$ that is $%
\dsum\limits_{s\in S}(rm_{s})s$.

For $\rho =\dsum\limits_{g\in G}r_{g}g\in RG$, the scalar product of $%
\dsum\limits_{s\in S}m_{s}s$ by $\rho $ is%
\begin{eqnarray*}
\rho \mu &=&\dsum\limits_{s\in S}r_{g}m_{s}(sg),\text{ }sg=s^{\prime }\in S,
\\
&=&\dsum\limits_{s^{\prime }\in S}m_{s^{\prime }}s^{\prime }\in MS
\end{eqnarray*}

It is easy to check that $MS$ is a left module over $RG$, and also as an $R$%
--module, it is denoted by $(MS)_{RG}$ and $(MS)_{R}$, respectively. The $RG$%
--module $MS$ is called $G$--set module of $S$ by $M$ over $RG$. It is clear
that $MS$ is also a $G$--set. If $S$ is a $G$--set and $H$ is a subgroup of $%
G$, then $S$ is also an $H$--set and $MS$ is an $RH$--module. In addition,
if $S$ is a $G$--set and a group, and $M=R$, then it is easy to verify that $%
RS$ is a group algebra. On the other hand, if a group acts on itself by
multiplication then naturally we have $(MS)_{RG}=(MG)_{RG}$. Since there is
a bijective correspondence between the set of actions of $G$ on a set $S$
and the set of homomorphisms from $G$ to $\Sigma _{S}$ ($\Sigma _{S}$ is the
group of permutations on $S$), the $G$--set modules is a large class of $RG$%
--modules and we would say that $(MG)_{RG}$ introduced in \cite{Kosan}
considering the group acting itself by multiplication is a first example of
the $G$--set modules. That is why the notion of the $RG$--module $MS$
presents a generalization of the structure and discussions of $RG$--module $%
MG$ and some principal module-theoretic questions arise out of the structure
of $(MS)_{RG}$. Therefore, this new concept generalizes not only the group
algebra but also the group module, and also unifies the theory of these two
concepts.

The purpose of this paper is to introduce the concept of the $RG$--module $%
MS $, and show the close connection between the properties of $(MS)_{RG}$, $%
M_{R}$, $S$ \ and $G$. The semisimplicity of $(MS)_{RG}$ with regard to the
properties of $M_{R}$, $S$ and $G$ and the decomposition of $(MS)_{RG}$ into 
$RG$--submodules will occupy a significant portion of this paper. In Section
1, we present some examples and some properties of $(MS)_{RG}$ to show that
an $R$--module can be extended to $RG$--modules in various ways via the
change of the $G$--set and the group ring. In Section 2, we give our first
major result about the decomposition of a given $RG$--module $MS$ as a
direct sum of $RG$--submodules. In Section 3, in order to go further into
the structure of $(MS)_{RG}$, we first require $\varepsilon _{MS}$ that is
an extension of the usual augmentation map $\varepsilon _{R}$ and the kernel
of $\varepsilon _{MS}$ denoted by $\triangle _{G}(MS)$. Then we give the
condition for when $\triangle _{G}(MS)$ is an $RG$--submodule of $(MS)_{RG}$%
. Finally, we are interested in the semisimplicity of $(MS)_{RG}$ according
to the properties of $M_{R}$, $S$ and $G$.

We start to set out the idea of $G$--set modules in more detail by
considering some examples of $G$--set modules and establishing some
properties of $(MS)_{RG}$. The following examples for $(MS)_{RG}$ show how
useful the notion of $G$--set module for extension of an $R$--module $M$ to
an $RG$--module. They also point the relations among $G$--set $S$, $RG$%
--module $MS$, $G$ and $H$ where $H\leq G$. Example \ref{ex1} shows that for
different group actions on different $G$--sets of the same finite group we
get different extensions of an $R$--module $M$ to an $RG$--module. Moreover,
we see that these are also $RH$--modules unsurprisingly in Example \ref{ex2}.

\begin{example}
\label{ex1}Let $M$ be an $R$--module, $G=D_{6}=\left\langle
a,b:a^{3}=b^{2}=e,b^{-1}ab=a^{-1}\right\rangle $ and $r=\dsum\limits_{g\in
D_{6}}r_{g}g=r_{1}e+r_{2}a+r_{3}a^{2}+r_{4}b+r_{5}ba+r_{6}ba^{2}\in RD_{6}$.

\begin{enumerate}
\item Let $S=G$ and let the group act itself by multiplication. Then $MS=MG$
is an $RG$--module.

\item Let $S=\left\{ D_{6},C_{3},C_{2},Id\right\} $ and let $G$ act on its
set of subgroups $C_{3}=\left\langle a:a^{3}=e\right\rangle \leq D_{6}$, $%
C_{2}=\left\langle b:b^{2}=e\right\rangle \leq D_{6}$, $Id=\left\{ e\right\}
\leq D_{6}$ by $g\ast H=gHg^{-1}$ for $H\leq G$, $g\in G$. Then $%
MS=\{\dsum\limits_{s\in
S}m_{s}s=m_{Id}Id+m_{C_{2}}C_{2}+m_{C_{3}}C_{3}+m_{D_{6}}D_{6}\mid m_{s}\in
M\}$ and we get%
\begin{eqnarray*}
r\mu &=&\left(
r_{1}m_{1}+r_{2}m_{1}+r_{3}m_{1}+r_{4}m_{1}+r_{5}m_{1}+r_{6}m_{1}\right) Id
\\
&&+\left(
r_{1}m_{C_{2}}+r_{2}m_{C_{2}}+r_{3}m_{C_{2}}+r_{4}m_{C_{2}}+r_{5}m_{C_{2}}+r_{6}m_{C_{2}}\right) C_{2}
\\
&&+\left(
r_{1}m_{C_{3}}+r_{2}m_{C_{3}}+r_{3}m_{C_{3}}+r_{4}m_{C_{3}}+r_{5}m_{C_{3}}+r_{6}m_{C_{3}}\right) C_{3}
\\
&&+\left(
r_{1}m_{D_{6}}+r_{2}m_{D_{6}}+r_{3}m_{D_{6}}+r_{4}m_{D_{6}}+r_{5}m_{D_{6}}+r_{6}m_{D_{6}}\right) D_{6}%
\text{.}
\end{eqnarray*}

\item Let $S=\left\{ K_{1}=\left\{ e,b\right\} ,K_{2}=\left\{ a,ba\right\}
,K_{3}=\left\{ a^{2},ba^{2}\right\} \right\} $ that is the set of right
cosets of a fixed subgroup $H=C_{2}=\left\langle b:b^{2}=e\right\rangle \leq
D_{6}$ and let $G$ act on $S$ by $g\ast (Hx)=H(gx)$ for $x,g\in G$. Then $%
MS=\{\dsum\limits_{s\in
S}m_{s}s=m_{K_{1}}K_{1}+m_{K_{2}}K_{2}+m_{K_{3}}K_{3}\mid m_{s}\in M\}$ and
we have the following relations such that 
\begin{equation*}
\begin{array}{ccc}
K_{1}1=K_{1} & K_{2}1=K_{2} & K_{3}1=K_{3} \\ 
K_{1}a=K_{2} & K_{2}a=K_{1} & K_{3}a=K_{1} \\ 
K_{1}a^{2}=K_{3} & K_{2}a^{2}=K_{3} & K_{3}a^{2}=K_{2} \\ 
K_{1}b=K_{1} & K_{2}b=K_{3} & K_{3}b=K_{2} \\ 
K_{1}ba=K_{2} & K_{2}ba=K_{1} & K_{3}ba=K_{3} \\ 
K_{1}ba^{2}=K_{3} & K_{2}ba^{2}=K_{2} & K_{3}ba^{2}=K_{1}.%
\end{array}%
\end{equation*}%
So, we get%
\begin{eqnarray*}
r\mu &=&\left(
r_{1}m_{K_{1}}+r_{4}m_{K_{1}}+r_{3}m_{K_{2}}+r_{5}m_{K_{2}}+r_{2}m_{K_{3}}+r_{6}m_{K_{3}}\right) K_{1}
\\
&&+\left(
r_{2}m_{K_{1}}+r_{5}m_{K_{1}}+r_{1}m_{K_{2}}+r_{6}m_{K_{2}}+r_{3}m_{K_{3}}+r_{4}m_{K_{3}}\right) K_{2}
\\
&&+\left(
r_{3}m_{K_{1}}+r_{6}m_{K_{1}}+r_{2}m_{K_{2}}+r_{4}m_{K_{2}}+r_{1}m_{K_{3}}+r_{5}m_{K_{3}}\right) K_{3}.
\end{eqnarray*}
\end{enumerate}
\end{example}

\begin{example}
\label{ex2}Let $M$ be an $R$--module, $G=D_{6}=\left\langle
a,b:a^{3}=b^{2}=e,b^{-1}ab=a^{-1}\right\rangle $, $H=C_{3}=\left\langle
a:a^{3}=e\right\rangle \leq D_{6}$ and $k=\dsum\limits_{g\in
D_{6}}k_{g}g=k_{1}e+k_{2}a+k_{3}a^{2}\in RC_{3}$.

\begin{enumerate}
\item Let $S=G$ and let the group act itself by multiplication. Then $MS=MG$
is an $RH$--module.

\item Let $S=\left\{ D_{6},C_{3},C_{2},Id\right\} $ with the group action
defined in Example 1.1 (2). For $\mu =\dsum\limits_{s\in
S}m_{s}s=m_{Id}Id+m_{C_{2}}C_{2}+m_{C_{3}}C_{3}+m_{D_{6}}D_{6}\in MS$, we
get 
\begin{eqnarray*}
k\mu &=&\left( k_{1}m_{1}+k_{2}m_{1}+k_{3}m_{1}\right) Id+\left(
k_{1}m_{C_{2}}+k_{2}m_{C_{2}}+k_{3}m_{C_{2}}\right) C_{2} \\
&&+\left( k_{1}m_{C_{3}}+k_{2}m_{C_{3}}+k_{3}m_{C_{3}}\right) C_{3}+\left(
k_{1}m_{D_{6}}+k_{2}m_{D_{6}}+k_{3}m_{D_{6}}\right) D_{6}.
\end{eqnarray*}

\item Let $S=\left\{ K_{1}=\left\{ e,b\right\} ,K_{2}=\left\{ a,ba\right\}
,K_{3}=\left\{ a^{2},ba^{2}\right\} \right\} $ with the group action defined
in Example 1.1 (3). For $\mu =\dsum\limits_{s\in
S}m_{s}s=m_{K_{1}}K_{1}+m_{K_{2}}K_{2}+m_{K_{3}}K_{3}\in MS$, we get%
\begin{eqnarray*}
k\mu &=&\left( k_{1}m_{K_{1}}+k_{3}m_{K_{2}}+k_{2}m_{K_{3}}\right)
K_{1}+\left( k_{2}m_{K_{1}}+k_{1}m_{K_{2}}+k_{3}m_{K_{3}}\right) K_{2} \\
&&+\left( k_{3}m_{K_{1}}+k_{2}m_{K_{2}}+k_{1}m_{K_{3}}\right) K_{3}
\end{eqnarray*}
\end{enumerate}
\end{example}

Now, we make a point of some relations between the $R$--submodules of $M$
and the $RG$--submodules of $MS$ by the following results.

\begin{lemma}
Let $N_{1}$, $N_{2}$ be $R$--submodules of $M$. Then $N_{1}S+N_{2}S=MS$ if
and only if $N_{1}+N_{2}=M$.
\end{lemma}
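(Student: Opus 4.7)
The plan is to prove the two implications separately, with each direction reducing to a coefficient-wise argument on the formal sums $\sum_{s\in S} m_s s$.

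For the ``if'' direction, I would assume $N_1+N_2=M$ and take an arbitrary $\mu=\sum_{s\in S}m_s s\in MS$. Since each coefficient $m_s$ lies in $M=N_1+N_2$, I can decompose $m_s=n^{(1)}_s+n^{(2)}_s$ with $n^{(i)}_s\in N_i$ (and $n^{(i)}_s=0$ whenever $m_s=0$, so the supports remain finite). Grouping the terms gives
\[
\mu \;=\; \sum_{s\in S} n^{(1)}_s s \;+\; \sum_{s\in S} n^{(2)}_s s \;\in\; N_1 S + N_2 S,
\]
which establishes $MS\subseteq N_1S+N_2S$; the reverse inclusion is immediate from $N_i\subseteq M$.

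For the ``only if'' direction, I would fix an arbitrary $m\in M$ and any element $s_0\in S$, then consider the ``monomial'' $ms_0\in MS$. By hypothesis it decomposes as $ms_0=\alpha+\beta$ with $\alpha=\sum_{s\in S}n^{(1)}_s s \in N_1S$ and $\beta=\sum_{s\in S}n^{(2)}_s s \in N_2S$. Because equality in $MS$ is defined componentwise, comparing the coefficient of $s_0$ on both sides forces $m=n^{(1)}_{s_0}+n^{(2)}_{s_0}\in N_1+N_2$. Since $m\in M$ was arbitrary, $M\subseteq N_1+N_2$, and the reverse inclusion is automatic.

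Neither step presents a real obstacle; the proof is essentially bookkeeping, and the only subtlety is remembering that equality in $MS$ is defined coordinate-by-coordinate, which is precisely what allows the projection onto a single component in the ``only if'' direction and the assembly back together in the ``if'' direction. The whole argument will therefore be short.
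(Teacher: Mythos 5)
Your proof is correct and follows essentially the same route as the paper's: decompose each coefficient $m_s$ via $N_1+N_2=M$ for one direction, and compare coefficients of a single monomial $ms_0$ for the other. If anything, your version is written more carefully than the paper's (which contains some typographical slips in the "only if" step), but the underlying argument is identical.
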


\begin{proof}
Let $N_{1}S+N_{2}S=NS$. Take $m\in M$ and so $ms\in MS$ for any $s\in S$. We
write $ms=\dsum\limits_{s_{i}\in S}n_{s_{i}}s_{i}+\dsum\limits_{s_{j}\in
S}n_{s_{j}}s_{j}$ for $\dsum\limits_{s_{i}\in S}n_{s_{i}}s_{i}\in N_{1}S$
and $\dsum\limits_{s_{j}\in S}n_{s_{j}}s_{j}\in N_{2}S$ where $n_{s_{i}}\in
N_{1}$, $n_{s_{j}}\in N_{2}S$. So, there exists $i,j$ such that $%
m=m_{s_{i}}+m_{s_{j}}$.

Let $N_{1}+N_{2}=M$ and $\mu =\dsum\limits_{s\in S}m_{s}s\in MS$. For all $%
s\in S$, we can write $m_{s}=n_{s}+n_{s}^{\prime }$ where $n_{s}\in N_{1}$, $%
n_{s}^{\prime }\in N_{2}$. Hence, $\mu =\dsum\limits_{s\in
S}n_{s}s+\dsum\limits_{s\in S}n_{s}^{\prime }s$, and so $N_{1}S+N_{2}S=NS$.
\end{proof}

\begin{lemma}
Let $N_{1}$, $N_{2}$ be $R$--submodules of $M$. Then $N_{1}S\cap N_{2}S=0$
if and only if $N_{1}\cap N_{2}=0$.
\end{lemma}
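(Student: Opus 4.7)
The plan is to prove both implications directly, exploiting the componentwise equality on $MS$ that was fixed at the outset (namely, $\sum_{s\in S}m_{s}s=\sum_{s\in S}n_{s}s$ iff $m_{s}=n_{s}$ for every $s$). This uniqueness of coefficients is what keeps the argument short; without it one would have to worry about different formal sums representing the same element.

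For the ``only if'' direction, I would start with an element $m\in N_{1}\cap N_{2}$ and pick any single $s\in S$. Then $ms$ can be viewed as the formal sum with coefficient $m$ at the slot $s$ and $0$ elsewhere; this sum lies in $N_{1}S$ (since $m\in N_{1}$) and in $N_{2}S$ (since $m\in N_{2}$). By hypothesis $N_{1}S\cap N_{2}S=0$, so $ms=0$, and by the coefficient-uniqueness convention this forces $m=0$. Hence $N_{1}\cap N_{2}=0$.

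For the ``if'' direction, I would take an arbitrary $\mu=\sum_{s\in S}m_{s}s\in N_{1}S\cap N_{2}S$. Membership in $N_{1}S$ means $\mu$ can be written as $\sum_{s\in S}n_{s}s$ with $n_{s}\in N_{1}$, and coefficient-uniqueness identifies $m_{s}=n_{s}\in N_{1}$ for every $s$. The same reasoning via $N_{2}S$ yields $m_{s}\in N_{2}$ for every $s$. Thus each $m_{s}\in N_{1}\cap N_{2}=0$, so $\mu=0$ and $N_{1}S\cap N_{2}S=0$.

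There is really no hard step here; the only thing to be slightly careful about is not to conflate a formal sum in $N_{i}S$ with an element of $N_{i}$ itself, and to invoke the componentwise equality explicitly so that the membership conditions translate into pointwise conditions on the coefficients $m_{s}$. Note also that the $RG$-module structure plays no role in this lemma: the statement and proof only use the $R$-module structure of $MS$, mirroring the previous lemma on sums.
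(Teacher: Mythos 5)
Your proof is correct and follows essentially the same route as the paper's: for one direction embed an element $n\in N_{1}\cap N_{2}$ as $ns$ and use $ns=0$, and for the other use coefficient-uniqueness to see that each coefficient of an element of $N_{1}S\cap N_{2}S$ lies in $N_{1}\cap N_{2}$. Your write-up is in fact slightly more careful in making the coefficient-uniqueness step explicit.
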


\begin{proof}
Let $N_{1}S+N_{2}S=0$. Take $n\in N_{1}\cap N_{2}$, and so $ns\in N_{1}S\cap
N_{2}S$. So, $n=0$ since $ns=0$.

Conversely, let $N_{1}\cap N_{2}=0$. Take $\eta =\dsum\limits_{s\in
S}n_{s}s\in N_{1}S\cap N_{2}S$. So $n_{s}\in N_{1}\cap N_{2}$ and $n_{s}=0$
for all $s\in S$. Hence, $N_{1}S\cap N_{2}S=0$.
\end{proof}

From \cite{Alp} we recall that if $G$ is a finite group, $S$ and $T$ are $G$%
--sets, then $\varphi :S\longrightarrow T$ is said to be a $G$--set
homomorphism if $\varphi (gs)=g\varphi (s)$ for any $g\in G$, $s\in S$. If $%
\varphi $ is bijective, then $\varphi $ is a $G$--set isomorphism. Then we
say that $S$ and $T$ are isomorphic $G$--sets, and we write $S\simeq T$.

For $s\in S$, $Gs=\left\{ gs:g\in G\right\} $ is the orbit of $s$. It is
easy to see that $Gs$ is also a $G$--set under the action induced from that
on $S$. In addition, a subset $S^{\prime }$ of $S$ is a $G$--set under the
action induced from $S$ if and only if $S^{\prime }$ is a union of orbits.

\begin{proposition}
Let $M$ be an $R$--module, $N$ an $R$--submodule of $M$, $G$ a finite group, 
$S$ a $G$--set. Then $\frac{MS}{NS}\simeq (\frac{M}{N})S$.
\end{proposition}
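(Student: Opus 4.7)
The plan is to exhibit an explicit $RG$-module epimorphism from $MS$ onto $(M/N)S$ whose kernel is precisely $NS$, and then invoke the first isomorphism theorem.

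First, I would define $\varphi: MS \longrightarrow (M/N)S$ by the natural coefficient-wise quotient map
\[
\varphi\Big(\sum_{s\in S} m_{s}s\Big)=\sum_{s\in S}(m_{s}+N)s.
\]
Well-definedness is immediate since equality in $MS$ is componentwise, and additivity follows directly from the componentwise sum introduced at the start of the paper. The one step that requires a little care is verifying that $\varphi$ is an $RG$-homomorphism: for $\rho=\sum_{g\in G}r_{g}g\in RG$ and $\mu=\sum_{s\in S}m_{s}s$, the scalar action given in the introduction rewrites $\rho\mu$ by permuting the indices $s\mapsto sg$ and multiplying coefficients by $r_{g}$; since the quotient map $M\to M/N$ is $R$-linear, the same permutation-plus-scalar prescription produces $\rho\,\varphi(\mu)$ in $(M/N)S$. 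Thus $\varphi(\rho\mu)=\rho\,\varphi(\mu)$.

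Next, surjectivity is transparent: any element $\sum_{s\in S}\overline{m}_{s}s\in(M/N)S$ involves only finitely many nonzero cosets, and lifting each $\overline{m}_{s}$ to a representative $m_{s}\in M$ gives a preimage in $MS$. For the kernel, $\varphi(\mu)=0$ means $(m_{s}+N)=N$ for every $s\in S$, i.e.\ $m_{s}\in N$ for all $s$; this is exactly the condition that $\mu\in NS$. Hence $\ker\varphi=NS$, and the first isomorphism theorem for $RG$-modules yields $MS/NS\simeq(M/N)S$.

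I do not anticipate a serious obstacle here; the only non-bookkeeping point is the verification of $RG$-linearity, where one must be careful because the $RG$-action in $MS$ simultaneously permutes the $S$-indices and collects terms that land on the same $s'=sg$. The key observation is that this reindexing procedure is defined using only the $G$-action on $S$ and the $R$-module operation on coefficients, both of which are preserved under passage to $M/N$, so the calculation in $(M/N)S$ is the componentwise image of the calculation in $MS$.
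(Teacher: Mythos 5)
Your proposal is correct and follows essentially the same route as the paper: the paper also defines the coefficient-wise reduction map $\theta(\sum_{s\in S}m_{s}s)=\sum_{s\in S}(m_{s}+N)s$, checks it is an $RG$-epimorphism with kernel $NS$, and concludes by the first isomorphism theorem. Your write-up is in fact more explicit than the paper's about why the reindexing in the $RG$-action commutes with passage to $M/N$.
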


\begin{proof}
We know that $NS$ is an $RG$--submodule of $MS$. Define a map $\theta $ such
that%
\begin{equation*}
\begin{array}{cccc}
\theta : & MS & \longrightarrow & (\frac{M}{N})S%
\end{array}%
\text{, }%
\begin{array}{cccc}
\mu =\dsum\limits_{s\in S}m_{s}s & \longmapsto & \theta (\mu ) & 
=\dsum\limits_{s\in S}(m_{s}+N)s%
\end{array}%
\end{equation*}%
\begin{eqnarray*}
\theta (g\mu ) &=&\theta (g\dsum\limits_{s\in S}m_{s}s) \\
&=&g\theta (\mu )
\end{eqnarray*}%
So, $\theta $ is a $G$--set homomorphism. It is clear that $\theta $ is a $G$%
--set epimomorphism. Furthermore, $\theta $ is an $RG$--epimorphism and we
get $\ker \theta =NS$.
\end{proof}

\begin{lemma}
\label{mingset}Any proper subset of an orbit $Gs$ of $s\in S$ is not a $G$%
--set under the action induced from $S$.
\end{lemma}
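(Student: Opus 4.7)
The plan is to argue by contradiction using the fact (noted just before the lemma) that a subset of $S$ is a $G$-set under the induced action precisely when it is a union of orbits. So I want to show that an orbit $Gs$ cannot be written as a union of orbits in any nontrivial way, i.e.\ it has no proper $G$-stable subsets.

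First I would let $T$ be a proper subset of $Gs$ and assume, for contradiction, that $T$ is closed under the action of $G$ (this is what it means for $T$ to be a $G$-set under the induced action). Since $T \neq \emptyset$ would be the nontrivial case (the empty set is vacuously fine but can be handled by noting the lemma presumably intends nonempty proper subsets, or else one observes that $\emptyset$ still equals the empty union of orbits and one simply restricts to nonempty $T$), pick any $t \in T$. Because $t \in Gs$, there exists $g_{0} \in G$ with $t = g_{0}s$.

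Next I would compute the orbit of $t$. For every $g \in G$, $gt = (gg_{0})s$, and as $g$ ranges over $G$ the product $gg_{0}$ also ranges over all of $G$ (right multiplication by $g_{0}$ is a bijection of $G$). Hence $Gt = \{(gg_{0})s : g \in G\} = Gs$. Since $T$ is assumed stable under $G$, we have $Gt \subseteq T$, and therefore $Gs \subseteq T$. Combined with $T \subseteq Gs$ this gives $T = Gs$, contradicting the assumption that $T$ was a proper subset.

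The argument is essentially routine and there is no serious obstacle; the only subtle point is being careful with the empty subset, which I would dispose of by either explicitly assuming $T$ is nonempty (the content of the lemma is really about nonempty proper subsets) or by remarking that $\emptyset$ is trivially a union of zero orbits and so does not undermine the intended usage in later sections.
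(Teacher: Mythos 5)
Your argument is correct and is essentially the paper's own proof: both exploit transitivity of $G$ on the orbit, taking an element of $T$ and acting by a suitable group element (your $gg_{0}$ versus the paper's $gh^{-1}$ applied to $hs$) to reach a point of $Gs$ outside $T$, contradicting closure. Your explicit handling of the empty set is a minor refinement the paper omits, but the substance is identical.
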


\begin{proof}
Suppose that a proper subset $T$ of an orbit $Gs$ of $s\in S$ is a $G$--set.
Then there exist $sg\in G$, $sg\notin T$ for some $g\in G$. Take an element $%
sh$ in $T$, $h\in G$, and so%
\begin{eqnarray*}
(gh^{-1})(hs) &=&g(h^{-1}(hs)) \\
&=&gs\notin T.
\end{eqnarray*}%
Hence, we call the orbit $Gs$ of $s\in S$ the minimal\ $G$--set. Moreover, $%
S=\bigcup\limits_{i\in I}Gs_{i}$ where $I$ denotes the index of disjoint
orbits of $S$. Hence, we have 
\begin{equation*}
MS=M(\bigcup\limits_{i\in I}Gs_{i}).
\end{equation*}
\end{proof}

\begin{lemma}
Let $N$ be an $R$--submodule of an $R$--module $M$, $S$ a $G$--set. Let $I$
denote the index of disjoint orbits of $S$, $J$ a subset of $I$ and $%
S^{\prime }=\bigcup\limits_{j\in J}Gs_{j}$ and let $Gs_{i}$ be an orbit $Gs$
of $s_{i}\in S$ for $i\in I$. Then we have the following results:
\end{lemma}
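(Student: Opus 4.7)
The plan is to exploit the fact that $S$ decomposes as the disjoint union of its orbits and that each orbit, being $G$-invariant, produces an $RG$-invariant piece of $MS$. Since the statement only quotes the hypotheses up to ``we have the following results,'' I will target the three results one naturally expects in this setup: (i) $M(Gs_i)$ is an $RG$-submodule of $MS$; (ii) $NS'$ is an $RG$-submodule of $MS$; and (iii) $MS=\bigoplus_{i\in I}M(Gs_i)$ as $RG$-modules, with the analogous identity $NS'=\bigoplus_{j\in J}N(Gs_j)$.

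First I would handle (i). An element of $M(Gs_i)$ is a finite sum $\sum_{s\in Gs_i}m_s s$. For any $\rho=\sum_{g\in G}r_g g\in RG$, the scalar product rewrites every $s g$ with $s\in Gs_i$, and by definition of an orbit $sg\in Gs_i$. Thus $\rho \cdot M(Gs_i)\subseteq M(Gs_i)$, while closure under addition and under the $R$-action is immediate from the componentwise definitions already recorded in Section~1. The same argument, applied to $S'=\bigcup_{j\in J}Gs_j$ (a union of orbits, hence $G$-invariant) and to coefficients drawn from the submodule $N$, gives (ii).

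Next I would prove (iii). Using Lemma~\ref{mingset}, $S=\bigsqcup_{i\in I}Gs_i$ is a disjoint union, so every $\mu=\sum_{s\in S}m_s s\in MS$ has a unique expression $\mu=\sum_{i\in I}\mu_i$ with $\mu_i=\sum_{s\in Gs_i}m_s s\in M(Gs_i)$ (all but finitely many $\mu_i$ zero). Existence of the decomposition follows by grouping terms according to which orbit the index $s$ lies in; uniqueness follows from the fact that two elements of $MS$ are equal iff their coefficient functions on $S$ agree, together with the disjointness of the orbits. This shows $MS=\sum_i M(Gs_i)$ and $M(Gs_i)\cap \sum_{k\ne i}M(Gs_k)=0$, i.e.\ the internal direct sum $MS=\bigoplus_{i\in I}M(Gs_i)$ as $R$-modules; combined with (i), it is an $RG$-direct sum. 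The same argument restricted to $N$ and to the subfamily indexed by $J$ yields $NS'=\bigoplus_{j\in J}N(Gs_j)$.

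The only mildly subtle point is bookkeeping: one must be careful that the formal-sum definition of $MS$ really does identify $MS$ with $\bigoplus_{s\in S}Ms$ as an $R$-module (this is built into the ``$m_s=0$ for almost every $s$'' convention), so that grouping by orbit gives an honest internal direct sum rather than just a spanning statement. Once that is acknowledged, the $G$-invariance of orbits does all the work; I do not anticipate a genuine obstacle beyond checking that the $RG$-action respects the partition, which is exactly the computation in step~(i).
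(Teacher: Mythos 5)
Your guesses at the unstated conclusions cover two of the paper's three items correctly: that $NS'$ (and each $NGs_j$) is an $RG$--submodule of $MS$, and that $NS'$ decomposes over the orbits indexed by $J$. For those, your argument --- orbits are $G$--invariant, so the $RG$--action preserves each orbit block, and the ``$m_s=0$ for almost all $s$'' convention makes the grouping by orbit an honest internal direct sum --- is exactly what the paper has in mind; indeed the paper dismisses its items 2 and 3 as ``clear by the definition of $MS$,'' and your direct-sum formulation is if anything sharper than the paper's statement $NS'=\bigcup_{j\in J}(NGs_j)$.

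The genuine gap is that the paper's first item contains a second clause you did not anticipate and therefore did not prove: $NGs_i$ is a \emph{minimal} $RG$--submodule of $MS$ containing $N$ (with respect to the induced action). Closure under the $RG$--action, which you do verify, is only half of item 1. The minimality needs a separate argument, which the paper supplies as follows: suppose $N_1$ is an $RG$--submodule with $N\subseteq N_1\subseteq NGs_i$. For $n\in N$, some element of the form $nhs_i$ lies in $N_1$; acting by $h^{-1}$ gives $ns_i\in N_1$, and then acting by arbitrary $g\in G$ gives $ngs_i\in N_1$ for all $g$, so $N_1\supseteq NGs_i$ and hence $N_1=NGs_i$. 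Nothing in your proposal performs this descent from ``contains $N$'' to ``contains all of $NGs_i$,'' so as written your proof establishes items 2 and 3 and only the routine half of item 1.
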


\begin{enumerate}
\item $NGs_{i}$ is an $RG$--submodule of $MS$ for $s_{i}\in S$. Moreover, $%
NGs_{i}$ is a minimal $RG$--submodule of $MS$ containg $N$ under the action
induced from that on $S$.

\item $NS^{\prime }=N(\bigcup\limits_{j\in J}Gs_{j})=\bigcup\limits_{j\in
J}(NGs_{j})$.

\item $NS^{\prime }$\ is an $RG$--submodule of $MS$.
\end{enumerate}

\begin{proof}
\begin{enumerate}
\item It is clear that $NGs_{i}\subseteq MS$. Let $\eta =\sum\limits_{g\in
G}n_{g}gs_{i}\in NGs_{i}$ , $r\in R$, $h\in G$. Then we have $r\eta \in
NGs_{i}$ and $h\eta =h(\sum\limits_{g\in G}n_{g}gs_{i})=\sum\limits_{g\in
G}n_{g}hgs_{i}=\sum\limits_{hg=g^{\prime }\in G}n_{g}g^{\prime }s_{i}\in
NGs_{i}$. Hence, $NGs_{i}$ is an $RG$--submodule of $MS$. Assume that there
is an $RG$--submodule $N_{1}$ of $MS$ such that $N_{R}\leq (N_{1})_{RG}\leq
(NGs_{i})_{RG}$. Take an element $n\in N$, and so $nhs_{i}\in N_{1}$ for
some $h\in G$ since $(N_{1})_{RG}\leq (NGs_{i})_{RG}$. Then $%
h^{-1}(nhs_{i})=(nes_{i})=ns_{i}\in N_{1}$ and $g(ns_{i})=ngs_{i}\in N_{1}$
for all $g\in G$. This means that $N_{1}=NGs_{i}$.

\item[2,3.] Clear by the definition of $MS$.
\end{enumerate}
\end{proof}

\begin{lemma}
\label{modset}Let $L$ be an $RG$--submodule of $MS$, a fixed $s\in S$. Then,

\begin{enumerate}
\item $L_{s}=\{x\in M\mid $ there is $y\in L$ such that $y=xs+k$, $k\in MS\}$
is an $R$--submodule of $M$.

\item $S_{L}=\{s\in S\mid $there is $x\in M$, and also $k\in L$ such that $%
y=xs+k\in L$ $\}$ is a $G$--set in $S$ under the action induced from that on 
$S$.
\end{enumerate}
\end{lemma}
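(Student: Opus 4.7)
The plan is to verify both assertions directly from the definitions, using only that $L$ is closed under (i) addition, (ii) scalar multiplication by $R$ (via the embedding $R \hookrightarrow RG$, $r \mapsto re$), and (iii) left multiplication by elements of $G$. Neither part looks like more than bookkeeping.

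For part (1), I would check the three $R$-submodule axioms for $L_s \subseteq M$. The zero element lies in $L_s$, witnessed by $y = 0 \in L$. If $x_1, x_2 \in L_s$ are witnessed by $y_i = x_i s + k_i \in L$ (with $k_i$ collecting the non-$s$ support of $y_i$), then $y_1 + y_2 = (x_1+x_2)s + (k_1+k_2)$ lies in $L$ and exhibits $x_1 + x_2 \in L_s$. Similarly, for $r \in R$ we use $r = re \in RG$: the element $ry_1 = (rx_1)s + rk_1 \in L$ exhibits $rx_1 \in L_s$. This gives $L_s \leq M_R$.

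For part (2), I would show that $S_L$ is closed under the $G$-action inherited from $S$, so that it is a sub-$G$-set of $S$. Take $s \in S_L$ with witness $y = xs + k \in L$ and $k \in L$. For any $g \in G$, the $RG$-module property of $L$ gives both $gy \in L$ and $gk \in L$. Using the scalar-action formula from the introduction, $g\cdot(ms) = m(sg)$, we obtain $gy = x(sg) + gk$; setting $x' = x$ and $k' = gk \in L$, this displays $sg \in S_L$. Hence $S_L$ is a union of $G$-orbits in $S$, so the action of $G$ on $S$ restricts to an action on $S_L$.

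Neither step presents a genuine obstacle; both are formal consequences of $L$ being an $RG$-submodule. The only point requiring care is the choice of the witness decomposition $y = xs + k$: in part (1) one must treat $k$ as the complement of the $s$-coordinate of $y$, and in part (2) one must invoke the paper's right-action convention $g\cdot(ms) = m(sg)$, which is exactly what guarantees that it is $sg$ (rather than $gs$ or $s$ itself) which lands back in $S_L$.
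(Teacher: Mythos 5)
Your proof is correct, and part (1) coincides with the paper's argument: the witnesses $y_{i}=x_{i}s+k_{i}\in L$ are added and scaled inside $L$ exactly as you describe. For part (2) you take a genuinely different --- and in fact more pertinent --- route. The paper's printed proof verifies the two $G$--set axioms $es=s$ and $(hg)s=h(gs)$ for elements of $S_{L}$; these hold automatically because the action is the one induced from $S$, and that argument never explicitly checks the one property that is \emph{not} automatic, namely stability of $S_{L}$ under the action. You prove precisely that: from a witness $y=xs+k\in L$ with $k\in L$ you produce $gy=x(sg)+gk\in L$ with $gk\in L$, so the whole orbit of $s$ stays in $S_{L}$; by the paper's own earlier observation that a subset of $S$ is a sub-$G$--set under the induced action if and only if it is a union of orbits, this is exactly what is needed. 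Your two cautionary remarks are also well placed: reading $k$ as the non-$s$ part of $y$ in part (1) is necessary (otherwise every $x\in M$ trivially satisfies $y=xs+(y-xs)$ and $L_{s}$ degenerates to $M$), and invoking the paper's convention $g\cdot(ms)=m(sg)$ is what identifies the element that lands back in $S_{L}$. So your argument is complete and, for part (2), supplies a step the paper leaves implicit.
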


\begin{proof}
\begin{enumerate}
\item It is obvious that $L_{s}$ is in $M$. Let $x_{1}$, $x_{2}\in
L_{s^{\prime }}$ and $r\in R$. Then, there is $y_{1}=x_{1}s+k_{1}$, $%
y_{2}=x_{2}s+k_{2}\in L$ and $y_{1}+y_{2}=(x_{1}+x_{2})s+k_{1}+k_{2}\in L$
where $x_{1}+x_{2}\in MS$. Furthermore, $ry_{1}=rx_{1}s+rk_{1}\in L$, and so 
$rx_{1}\in L_{s}$.

\item Let $s\in S^{\prime }$ and $g$, $h\in G$. Then $\exists x\in M,\exists
k\in L$ such that $y=xs+k\in L$ and 
\begin{equation*}
xs+k=y=ey=e(xs+k)=xes+ek=xes+k
\end{equation*}%
So, $s=es$. Since $s$ is also an element of $S$, we have 
\begin{eqnarray*}
(hg)y &=&(hg)(xs+k) \\
&=&(hg)xs+(hg)k.
\end{eqnarray*}%
Hence, we get $(hg)s=h(gs)$.
\end{enumerate}
\end{proof}

\begin{lemma}
\label{simplems}Let $M$ be an $R$--module and $S$ a $G$--set. Let $I$ denote
the index of disjoint orbits of $S$ such that $S=\bigcup\limits_{i\in
I}Gs_{i}$ and let $Gs_{i}$ be an orbit of $s_{i}\in S$ for $i\in I$. If $%
NGs_{i}$ is a simple $RG$--submodule of $MS$, then $N$ is a simple $R$%
--submodule of $M$ and $G$ is a finite group whose order is invertible in $%
End_{R}(M)$ ($\left\vert G\right\vert ^{-1}\in End_{R}(M)$).
\end{lemma}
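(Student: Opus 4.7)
The plan is to establish each of the two conclusions separately, in each case by contradiction exploiting the simplicity of $NGs_{i}$.

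For the simplicity of $N$ as an $R$--submodule of $M$, I would suppose toward contradiction that $N$ has a proper nonzero $R$--submodule $N'$. By the preceding lemma (applied to $N'$), $N'Gs_{i}$ is an $RG$--submodule of $MS$, and visibly $N'Gs_{i} \subseteq NGs_{i}$. I would check this inclusion is both proper and nontrivial: any $0 \neq n' \in N'$ supplies $n's_{i} \in N'Gs_{i} \setminus \{0\}$, while any $n \in N \setminus N'$ gives $ns_{i} \in NGs_{i}$ whose coefficient at $s_{i}$ equals $n \notin N'$, so $ns_{i} \notin N'Gs_{i}$ (using that distinct elements of $S$ contribute independently in $MS$). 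This contradicts the simplicity of $NGs_{i}$, forcing $N$ to have no proper nonzero $R$--submodule.

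For the claim $|G|^{-1} \in \mathrm{End}_{R}(M)$, the idea is to locate a canonical $G$--invariant submodule inside $NGs_{i}$. I would consider
\[
T = \Bigl\{ \textstyle\sum_{t \in Gs_{i}} n\,t : n \in N \Bigr\} \subseteq NGs_{i},
\]
the set of constant--coefficient sums along the orbit. Since $G$ permutes $Gs_{i}$, every element of $T$ is $G$--fixed, so $T$ is an $RG$--submodule. Simplicity forces $T = NGs_{i}$ (else $T = 0$, forcing $N = 0$), which in turn forces $|Gs_{i}| = 1$: for $n \neq 0$ the element $ns_{i}$ has coefficient $0$ at every $t \neq s_{i}$, which is incompatible with lying in $T$ unless the orbit is a single point. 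Once $s_{i}$ is a $G$--fixed point, the group sum $\sigma = \sum_{g \in G} g \in RG$ acts on $NGs_{i} = Ns_{i}$ as multiplication by $|G|$, and Schur's lemma applied to the division ring $\mathrm{End}_{RG}(NGs_{i})$ shows that this scalar action is either zero or invertible. Ruling out the zero case and then lifting the invertibility from $N$ to $M$ would yield $|G|^{-1} \in \mathrm{End}_{R}(M)$.

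The main obstacle will be this last step: passing from invertibility of $|G|$ on the simple submodule $N$ to invertibility of $|G|$ as an $R$--endomorphism of the entire module $M$. Since $N$ may be much smaller than $M$ and the hypothesis does not a priori grant any semisimplicity of $M$, some additional argument is required --- presumably one shows that a nontrivial kernel or cokernel of multiplication by $|G|$ on $M$ would produce a proper nonzero $RG$--submodule sitting inside $NGs_{i}$ and thereby contradict its simplicity.
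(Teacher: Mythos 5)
Your argument for the first conclusion is correct and is essentially the paper's own argument: the paper likewise passes from a proper nonzero $R$--submodule $L\leq N$ to the $RG$--submodule $LGs_{i}\leq NGs_{i}$ and invokes simplicity (citing Lemma \ref{mingset}, which is really about subsets of orbits rather than submodules). You supply exactly the detail the paper omits, namely that $N'Gs_{i}$ is nonzero and properly contained in $NGs_{i}$ because the coefficient at $s_{i}$ of any element of $N'Gs_{i}$ is a sum of elements of $N'$ and hence lies in $N'$.

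For the second conclusion there is nothing in the paper to compare against: the printed proof stops after establishing simplicity of $N$ and never addresses $\left\vert G\right\vert ^{-1}\in End_{R}(M)$ at all. Your attempt therefore goes further than the source, and your intermediate observation is correct and decisive: the diagonal set $T$ of constant--coefficient sums along the orbit is a nonzero $RG$--submodule of $NGs_{i}$, so simplicity forces $\left\vert Gs_{i}\right\vert =1$ and the $G$--action on $NGs_{i}=Ns_{i}$ to be trivial. But this very observation shows that the obstacle you flag at the end is not a repairable gap --- it is a counterexample to the claim. With trivial $G$--action, $Ns_{i}$ is simple over $RG$ exactly when $N$ is simple over $R$, irrespective of whether $\left\vert G\right\vert$ acts invertibly: take $R=M=N=\mathbb{Z}_{2}$, $G=C_{2}$ and $S$ a one--point set; then $NGs=\mathbb{Z}_{2}s$ is a simple $RG$--module while $\left\vert G\right\vert =2=0$ in $End_{R}(M)$. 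The Schur dichotomy you invoke lands in the zero branch here and cannot be excluded, and even when $\left\vert G\right\vert$ is invertible on $N$ it need not be invertible on all of $M$ (e.g. $M=\mathbb{Z}_{2}\oplus \mathbb{Z}_{3}$ over $\mathbb{Z}$ with $N=\mathbb{Z}_{3}$), since the hypotheses constrain only $N$. So your first half is complete and matches the paper; the hole in your second half is a defect of the lemma as stated (asserted without proof in the paper), not of your argument.
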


\begin{proof}
Assume that there is an $R$--submodule $L$ of $M$ such that $L\leq N\leq M$.
Then $(LGs_{i})_{RG}\leq (NGs_{i})_{RG}$, and by Lemma \ref{mingset} this is
a contradiction. So, $N$ is a simple $R$--submodule of $M$.
\end{proof}

\begin{theorem}
\label{assoc}Let $L$ be a simple $RG$--submodule of $MS$. Then there is a
unique simple $R$--submodule $N$ of $M$ and a unique orbit $Gs$ such that $%
L=NGs$.
\end{theorem}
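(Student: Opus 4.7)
The natural strategy is to exploit the orbit decomposition of $S$. By the preceding lemmas, $S = \bigcup_{i \in I} Gs_i$ is a disjoint union of orbits and each $MGs_i$ is an $RG$-submodule of $MS$, so the disjointness of supports forces an internal direct-sum decomposition $MS = \bigoplus_{i \in I} MGs_i$ as $RG$-modules. The associated projections $\pi_i : MS \to MGs_i$ are $RG$-homomorphisms because each orbit is $G$-stable.

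The first and crucial step is to pin the simple submodule $L$ inside a single orbit summand $MGs$. Restricting each $\pi_i$ to $L$, the kernel $\ker(\pi_i|_L)$ is an $RG$-submodule of $L$, hence $0$ or $L$ by simplicity, so each $\pi_i|_L$ is either zero or injective; some $\pi_j|_L$ must be injective because $L \neq 0$. The main obstacle is to rule out that $L$ is embedded diagonally across several orbit summands, and I would attempt this by taking a nonzero $y = \sum_i y_i \in L$ with $y_i \in MGs_i$ and using suitable elements of $RG$ to isolate a single orbit component as a nonzero element of $L \cap MGs_j$; simplicity of $L$ then forces $L \subseteq MGs_j$, and uniqueness of the orbit $Gs := Gs_j$ is immediate from the direct-sum decomposition.

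Once $L \subseteq MGs$ is secured, I would define $N := L_s$ via Lemma \ref{modset}, producing an $R$-submodule of $M$. To verify $L = NGs$, for the inclusion $L \subseteq NGs$ note that each $y \in L$ is supported on $Gs$ and that $G$-translation presents every coefficient of $y$ as the $s$-coefficient of an element of $L$, hence in $N$; for $NGs \subseteq L$ one must show $xs \in L$ whenever $x \in N$, which amounts to stripping the error term $k$ from the relation $xs + k \in L$ in the definition of $L_s$, and this uses cyclicity of the simple module $L$ together with the $RG$-action on its orbit summand. Once $L = NGs$ is established, Lemma \ref{simplems} yields that $N$ is a simple $R$-submodule of $M$; uniqueness of $N$ follows from its intrinsic description $N = L_s$, and uniqueness of $Gs$ has already been settled in the orbit-concentration step.
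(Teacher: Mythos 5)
Your reduction to the orbit decomposition $MS=\bigoplus_{i\in I}MGs_i$ and the observation that each restricted projection $\pi_i|_L$ is either zero or injective are fine, but the step you yourself flag as ``the main obstacle'' --- ruling out a diagonal embedding of $L$ across several orbit summands --- is precisely what is missing, and it cannot be supplied. Injectivity of some $\pi_j|_L$ only yields $L\cong\pi_j(L)\subseteq MGs_j$, not $L\subseteq MGs_j$; to get containment you would need every other $\pi_i|_L$ to vanish, and no element of $RG$ can separate orbit components when two orbits carry isomorphic $RG$-module structures. Concretely, let $G$ act trivially on $S=\{s_1,s_2\}$ and let $M$ be a simple $R$-module. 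Then $MS=Ms_1\oplus Ms_2$ and the diagonal $L=\{ms_1+ms_2 : m\in M\}$ is a simple $RG$-submodule (since $G$ acts trivially, its $RG$-submodules are exactly its $R$-submodules), yet $L\cap Ms_j=0$ for $j=1,2$, so your plan of producing a nonzero element of $L\cap MGs_j$ fails, and indeed $L$ is not equal to $NGs$ for any orbit $Gs=\{s_j\}$ and any $N\le M$. The same example breaks your second step: there $N=L_{s_1}=M$, but $ms_1\notin L$, so the ``error term'' $k$ in $xs+k\in L$ cannot be stripped away, by simplicity or otherwise.

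For comparison, the paper's own argument makes the same silent leap: it asserts that $L_sGs$ is an $RG$-submodule \emph{of} $L$, which already presupposes $xs\in L$ whenever $xs+k\in L$ for some $k$; the diagonal example violates this as well. So your proposal follows essentially the same plan as the paper (concentrate $L$ in a single orbit, set $N=L_s$ via Lemma~\ref{modset}, and invoke Lemma~\ref{simplems}), and both arguments founder at the same point. A correct treatment needs either an additional hypothesis preventing distinct orbit summands from sharing isomorphic simple constituents, or a weakening of the conclusion from equality $L=NGs$ to an isomorphism statement.
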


\begin{proof}
For some $s\in S$, by Lemma \ref{modset} $L_{s}$ is a non-zero $R$--module.
And so, $L_{s}Gs\neq 0$ is an $RG$--submodule of $L$. Since $L$ is simple $%
RG $--submodule, we have $L_{s}Gs=L$. Then, by Lemma \ref{simplems} $L_{s}$
is a simple $R$--submodule of $M$.

Take an element $s^{\prime }\in S$ such that $L_{s^{\prime }}$ is non-zero $%
R $--submodule of $M$. Hence, $L_{s^{\prime }}Gs^{\prime }=L=L_{s}Gs$. Take
an element $x\in L_{s^{\prime }}Gs^{\prime }$. And so, we write 
\begin{equation*}
x=\sum\limits_{i=1}^{n}l_{i}g_{i}s^{\prime }=\sum\limits_{i=1}^{n}k_{i}g_{i}s
\end{equation*}%
where $l_{i}\in L_{s^{\prime }}$, $k_{i}\in L_{s}$, $g_{i}\in G$ and $%
n=\left\vert G\right\vert $. Then, there exists $g_{j}\in G$ such that $%
g_{1}s=g_{j}s^{\prime }$, and $s=g_{1}^{-1}g_{j}s^{\prime }$. So, we get $%
Gs=Gs^{\prime }$. That is why we can write%
\begin{equation*}
Gs=S_{L}=\{s\in S\mid \text{there is }x\in M,\text{\ and also }k\in L\text{
such that }y=xs+k\in L\}.
\end{equation*}%
Moreover, $N=L_{s}=L_{s^{\prime }}$ is unique by the definition of $MS$.
\end{proof}

On the other hand, the following example shows that the converse of the
theorem does not hold.

\begin{example}
Let $R=%
\mathbb{Z}
_{3}$, $M=%
\mathbb{Z}
_{3}$, $G=C_{2}=\left\langle a:a^{2}=e\right\rangle $ and $RG=%
\mathbb{Z}
_{3}C_{2}$. If $S=G$ and $G$ acts on itself by group multiplication then $MS=%
\mathbb{Z}
_{3}C_{2}$ where $%
\mathbb{Z}
_{3}C_{2}$ is semisimple $RG$--module since $\left\vert G\right\vert \leq
\infty $ and characteristic of $R$ does not divide $\left\vert G\right\vert $
by Maschke's Theorem. Since $%
\mathbb{Z}
_{3}C_{2}$ is semisimple there is a unique decomposition of $%
\mathbb{Z}
_{3}C_{2}$ by Artin-Weddernburn Theorem. Then, $%
\mathbb{Z}
_{3}C_{2}\simeq 
\mathbb{Z}
_{3}\oplus 
\mathbb{Z}
_{3}$ as $R$--module since $\left\vert C_{2}\right\vert =2$. Here, $%
\mathbb{Z}
_{3}$ is a simple $R$--submodule of $%
\mathbb{Z}
_{3}C_{2}$. Moreover, by \cite{Sehgal} we have $%
\mathbb{Z}
_{3}C_{2}\simeq 
\mathbb{Z}
_{3}C_{2}(\frac{1+a}{2})\oplus 
\mathbb{Z}
_{3}C_{2}(\frac{1-a}{2})$ as $RG$--module where $%
\mathbb{Z}
_{3}C_{2}(\frac{1+a}{2})$ and $%
\mathbb{Z}
_{3}C_{2}(\frac{1-a}{2})$ are simple $RG$--submodules of $%
\mathbb{Z}
_{3}C_{2}$. Let $N=%
\mathbb{Z}
_{3}$ that is a simple $R$--submodule of $M$. Hovewer, $NGs=%
\mathbb{Z}
_{3}C_{2}$ is not simple $RG$--module.
\end{example}

\begin{lemma}
\label{a5}Let $\{M_{i}:i\in I\}$ be a family of right $R-$modules, $G$ a
finite group and $S$ a $G-$set. Then%
\begin{equation*}
\left( \left( \dbigoplus\limits_{i\in I}M_{i}\right) S\right) _{RG}=\left(
\dbigoplus\limits_{i\in I}M_{i}S\right) _{RG}
\end{equation*}
\end{lemma}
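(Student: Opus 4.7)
The plan is to exhibit a canonical $RG$--isomorphism between the two modules by matching the ``coordinate'' data on both sides. An element of $\bigl(\bigoplus_{i\in I}M_i\bigr)S$ has the form $\mu=\sum_{s\in S}m_s s$ where each $m_s=(m_{s,i})_{i\in I}\in\bigoplus_{i\in I}M_i$ has finitely many non-zero coordinates and only finitely many $m_s$ are non-zero; while an element of $\bigoplus_{i\in I}M_iS$ is a tuple $(\mu_i)_{i\in I}$ with $\mu_i=\sum_{s\in S}m_{s,i}s\in M_iS$ and only finitely many $\mu_i$ non-zero. The natural assignment is
\begin{equation*}
\varphi:\Bigl(\bigoplus_{i\in I}M_i\Bigr)S\longrightarrow\bigoplus_{i\in I}M_iS,\qquad \sum_{s\in S}(m_{s,i})_{i\in I}\,s\longmapsto\Bigl(\sum_{s\in S}m_{s,i}\,s\Bigr)_{i\in I}.
\end{equation*}

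First I would check well-definedness: the finite set $F=\{(s,i):m_{s,i}\neq 0\}$ is finite because the outer sum is finitely supported and each $m_s$ is finitely supported in $I$; consequently each $\varphi(\mu)_i=\sum_s m_{s,i}s$ is a legitimate element of $M_iS$, and $\{i\in I:\varphi(\mu)_i\neq 0\}$ is contained in the projection of $F$ to $I$, hence finite. Then $\varphi$ lives in $\bigoplus_{i\in I}M_iS$.

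Next I would verify that $\varphi$ is additive and $R$--linear, which reduces immediately to the componentwise definitions of the sum and scalar action in the $RG$--module $MS$ given in the introduction. For $RG$--linearity it suffices (by $R$--linearity and distributivity) to check the action of a single group element $g\in G$: writing $g\mu=\sum_{s\in S}(m_{s,i})_{i\in I}(sg)=\sum_{s'\in S}(m_{s'g^{-1},i})_{i\in I}s'$, we see that the $i$--th coordinate of $\varphi(g\mu)$ equals $\sum_{s'\in S}m_{s'g^{-1},i}\,s'=g\,\varphi(\mu)_i$, so $\varphi(g\mu)=g\varphi(\mu)$.

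Finally I would establish bijectivity. Injectivity is immediate: if $\varphi(\mu)=0$, then for every $i$ and every $s$ the coefficient $m_{s,i}$ vanishes, so $m_s=0$ for all $s$, hence $\mu=0$. Surjectivity is built by reversing the construction: given $(\mu_i)_{i\in I}$ with $\mu_i=\sum_{s}m_{s,i}s$, the element $\mu=\sum_{s\in S}(m_{s,i})_{i\in I}s$ is well defined because only finitely many $i$ contribute and each $\mu_i$ has finite support, so the set of $s$ for which some coordinate is non-zero is finite; plainly $\varphi(\mu)=(\mu_i)_{i\in I}$. I do not expect any serious obstacle here; the only point requiring a little care is the double finiteness bookkeeping needed to confirm that $\varphi$ genuinely lands in, and exhausts, the external direct sum rather than the direct product.
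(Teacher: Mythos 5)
Your proposal is correct and follows essentially the same route as the paper: the paper's proof simply writes down the canonical map $\sum_{s\in S}(\ldots,m_{s}^{(i)},\ldots)s\longmapsto\sum_{s\in S}(\ldots,m_{s}^{(i)}s,\ldots)$ and asserts it is an isomorphism. You supply the finiteness bookkeeping, the $RG$--linearity check, and the explicit inverse that the paper leaves implicit, all of which are accurate.
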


\begin{proof}
Consider the following map 
\begin{equation*}
\begin{array}{ccc}
\left( \dbigoplus\limits_{i\in I}M_{i}\right) S & \longrightarrow & 
\dbigoplus\limits_{i\in I}M_{i}S%
\end{array}%
\text{, }%
\begin{array}{ccc}
\dsum\limits_{s\in S}(...,m_{s}^{(i)},...)s & \longmapsto & 
\dsum\limits_{s\in S}(...,m_{s}^{(i)}s,...)%
\end{array}%
\end{equation*}%
that is an isomorphism.
\end{proof}

\begin{theorem}
An $R$--module $M_{R}$ is projective if and only if $(MS)_{RG}$ is
projective.
\end{theorem}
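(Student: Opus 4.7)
The plan is to use the characterization of projective modules as direct summands of free modules, together with Lemma \ref{a5}, which commutes the $(-)S$ construction with direct sums.

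For the forward direction, assume $M_{R}$ is projective. Choose an $R$-module $N$ with $M\oplus N\cong F$ for a free $R$-module $F=\bigoplus_{\lambda\in\Lambda}R$. Applying Lemma \ref{a5} twice yields $RG$-isomorphisms
\[
MS\oplus NS\;\cong\;FS\;\cong\;\bigoplus_{\lambda\in\Lambda}RS,
\]
so $(MS)_{RG}$ is an $RG$-summand of $\bigoplus_{\lambda}RS$, and it therefore suffices to prove that $(RS)_{RG}$ is projective. Decomposing $S=\bigsqcup_{i\in I}Gs_{i}$ into its disjoint orbits (as recorded after Lemma \ref{mingset}) gives $RS=\bigoplus_{i\in I}R\cdot Gs_{i}$ as $RG$-modules, and I would then identify each summand $R\cdot Gs_{i}$ with the cyclic $RG$-module generated by $s_{i}$, exhibiting $(RS)_{RG}$ as free. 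The main obstacle is this last identification: one must argue that the translates $\{gs_{i}:g\in G\}$ form a free $R$-basis of $R\cdot Gs_{i}$ on which $G$ acts regularly, so that $R\cdot Gs_{i}\cong RG$ as $RG$-modules; this is the point where the $G$-set structure of $S$ really enters.

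For the converse, suppose $(MS)_{RG}$ is projective, so that it is a direct summand of some free $RG$-module $(RG)^{(\Lambda)}$. Restricting scalars along the inclusion $R\hookrightarrow RG$ turns this summand relation into one between $(MS)_{R}$ and the free $R$-module $R^{(\Lambda\times G)}$, so $(MS)_{R}$ is projective over $R$. Now fix any $s_{0}\in S$ and define
\[
\iota:M\longrightarrow MS,\ \ m\mapsto ms_{0},\qquad \pi:MS\longrightarrow M,\ \ \sum_{s\in S}m_{s}s\mapsto m_{s_{0}}.
\]
Both maps are $R$-linear (the $R$-action on $MS$ being componentwise), and $\pi\iota=1_{M}$, so $M_{R}$ is a direct summand of $(MS)_{R}$. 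Being a direct summand of a projective $R$-module, $M_{R}$ is itself projective. I expect this converse to present no real difficulty; the substantive content of the theorem lies in securing $(RS)_{RG}$ as a free $RG$-module in the forward direction.
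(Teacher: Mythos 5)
Your converse direction is correct and is essentially the paper's argument made slightly more explicit: restriction of scalars exhibits $(MS)_{R}$ as a summand of a free $R$--module, and your retraction $\pi \iota =1_{M}$ through the coefficient at a fixed $s_{0}$ cleanly extracts $M_{R}$ as an $R$--direct summand of $(MS)_{R}$, a step the paper leaves implicit. No issues there.

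The forward direction, however, has a genuine gap at exactly the point you flag as the ``main obstacle,'' and that obstacle cannot be overcome in the stated generality. The translates $\{gs_{i}:g\in G\}$ do form an $R$--basis of $R\cdot Gs_{i}$, but $G$ acts regularly on the orbit only when the stabilizer of $s_{i}$ is trivial: in general $Gs_{i}\simeq G/H_{i}$ as $G$--sets with $H_{i}$ the stabilizer, so $R\cdot Gs_{i}\simeq R[G/H_{i}]$, and this is not a projective $RG$--module in general. Concretely, take $R=M=\mathbb{Z}_{p}$, $G=C_{p}$, and $S$ a one--point $G$--set; then $MS$ is the trivial module $\mathbb{Z}_{p}$ over $\mathbb{Z}_{p}C_{p}$, which is not projective (the group algebra $\mathbb{Z}_{p}C_{p}$ is local, so its projectives are free, of $\mathbb{Z}_{p}$--dimension divisible by $p$), even though $M_{R}$ is free. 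Hence the implication ``$M_{R}$ projective $\Rightarrow (MS)_{RG}$ projective'' fails without extra hypotheses, for instance that $G$ acts freely on $S$, or that the orders of all stabilizers are invertible in $R$ (which would make each $R[G/H_{i}]$ a direct summand of $RG$ via the idempotent $\widetilde{e}_{H_{i}}$ of Lemma \ref{idem}). You should be aware that the paper's own proof silently commits the same error: it shows $(MS)_{RG}$ is a direct summand of $((RS)^{(I)})_{RG}$ and then asserts projectivity, which presupposes that $(RS)_{RG}$ is projective --- precisely the claim that fails. Your write--up at least isolates the true difficulty, but flagging it is not resolving it: as written the forward direction is not proved, and in this generality it is not true.
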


\begin{proof}
Assume that $M_{R}$ is projective. Then for an index $I$, $(R)^{(I)}\simeq
M\oplus A$ where $A$ is a right $R$--module. So, by Lemma \ref{a5}%
\begin{eqnarray*}
((RS)^{(I)})_{RG} &\simeq &((R)^{(I)}S)_{RG} \\
&\simeq &((M\oplus A)S)_{RG} \\
&\simeq &(MS)_{RG}\oplus (AS)_{RG}
\end{eqnarray*}%
So, $(MS)_{RG}$ is projective.

Now, assume that $(MS)_{RG}$ is projective. Then $((RS)^{(I)})_{RG}\simeq
(MS)_{RG}\oplus B$ where $B$ is a right $RG$--module for some set $I$. All
this concerning modules are also $R$--modules and $((RS)^{(I)})_{R}\simeq
(MS)_{R}\oplus B_{R}$. $((RS)^{(I)})_{R}$ is a free module because $(RS)_{R}$
is free. Since $(MS)_{R}$ is direct summand of a free module, it is
projective. So, $M_{R}$ is projective.
\end{proof}

\section{The Decomposition of $(MS)_{RG}$}

The theme of this section is the examination of a $G$--set module $(MS)_{RG}$
through the study of a decomposition of it. The decompositions of $RG$ and $%
(MG)_{RG}$ obtained from the idempotent defined as $e_{H}=\frac{\hat{H}}{%
\left\vert H\right\vert }$ , where $\left\vert H\right\vert $ is the order
of $H$ and $\hat{H}=$ $\sum\limits_{h\in H}h$, explained in \cite{Sehgal}
and \cite{ucalkan}, respectively. A similar method give a criterion for the
decomposition of a $G$--set module $(MS)_{RG}$. In addition, $End_{RG}MS$
denotes all the $RG$--endomorphisms of $MS$.

\begin{lemma}
\label{idem}Let $M$ be an $R$-module and $H$ a normal subgroup of finite
group $G$. If $\left\vert H\right\vert $, the order of $H$,\ is invertible
in $R$ then $\widetilde{e}_{H}=\frac{\hat{H}}{\left\vert H\right\vert }\ $is
an idempotent in $End_{RG}(MS)$. Moreover, $\widetilde{e}_{H}$ is central in 
$End_{RG}(MS)$.
\end{lemma}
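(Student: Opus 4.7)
The plan is to interpret $\widetilde{e}_{H}$ as the $R$-linear map $L_{\widetilde{e}_{H}}\colon MS \to MS$ given by left multiplication, $\mu \mapsto \widetilde{e}_{H}\mu$, which makes sense because $MS$ is an $RG$-module and $|H|^{-1}\in R$. Three things must be verified: first, that $L_{\widetilde{e}_{H}}$ actually lies in $End_{RG}(MS)$ (i.e.\ is $RG$-linear, not merely $R$-linear); second, that it is idempotent; and third, that it is central in $End_{RG}(MS)$. The whole argument will be driven by a single observation, namely that $\hat{H}$ is a central element of $RG$, and it is here that normality of $H$ is essential.

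First I would check the centrality of $\hat{H}$ in $RG$. It suffices to show $g\hat{H}=\hat{H}g$ for every $g\in G$, and because $H\trianglelefteq G$ the map $h\mapsto ghg^{-1}$ is a bijection $H\to H$, so
\begin{equation*}
g\hat{H}g^{-1}=\sum_{h\in H}ghg^{-1}=\sum_{h'\in H}h'=\hat{H},
\end{equation*}
giving $g\hat{H}=\hat{H}g$ and hence $\rho\hat{H}=\hat{H}\rho$ for all $\rho\in RG$. Dividing by $|H|$ shows $\widetilde{e}_{H}$ is central in $RG$. From this it is immediate that $L_{\widetilde{e}_{H}}$ is $RG$-linear: for $\rho\in RG$ and $\mu\in MS$, $L_{\widetilde{e}_{H}}(\rho\mu)=\widetilde{e}_{H}\rho\mu=\rho\widetilde{e}_{H}\mu=\rho L_{\widetilde{e}_{H}}(\mu)$.

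Next I would verify idempotency. A short counting argument shows $\hat{H}^{2}=|H|\hat{H}$: expanding $\hat{H}^{2}=\sum_{h,h'\in H}hh'$ and grouping by the product $k=hh'$, for each fixed $k\in H$ the number of pairs $(h,h')$ with $hh'=k$ equals $|H|$. Therefore
\begin{equation*}
\widetilde{e}_{H}^{\,2}=\frac{\hat{H}^{2}}{|H|^{2}}=\frac{|H|\hat{H}}{|H|^{2}}=\widetilde{e}_{H},
\end{equation*}
so $L_{\widetilde{e}_{H}}\circ L_{\widetilde{e}_{H}}=L_{\widetilde{e}_{H}^{\,2}}=L_{\widetilde{e}_{H}}$.

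Finally, for centrality in $End_{RG}(MS)$, let $\varphi\in End_{RG}(MS)$ be arbitrary. Because $\varphi$ is $RG$-linear and $\widetilde{e}_{H}\in RG$, for every $\mu\in MS$ we have $\varphi(\widetilde{e}_{H}\mu)=\widetilde{e}_{H}\varphi(\mu)$, i.e.\ $\varphi\circ L_{\widetilde{e}_{H}}=L_{\widetilde{e}_{H}}\circ \varphi$. Since $\varphi$ was arbitrary, $L_{\widetilde{e}_{H}}$ is central in $End_{RG}(MS)$. The only step that requires any real thought is the first one, where normality of $H$ is what lets the conjugation sum collapse back to $\hat{H}$; everything else is a formal consequence of $RG$-linearity and the identity $\hat{H}^{2}=|H|\hat{H}$.
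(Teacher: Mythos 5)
Your proof is correct and follows essentially the same route as the paper's: normality of $H$ gives $g\hat{H}=\hat{H}g$ (hence $RG$-linearity of multiplication by $\widetilde{e}_{H}$), the identity $\hat{H}^{2}=|H|\hat{H}$ gives idempotency, and $RG$-linearity of an arbitrary endomorphism gives centrality. Your write-up is in fact a bit more careful than the paper's (the explicit counting argument for $\hat{H}^{2}=|H|\hat{H}$ and the clean separation of the three claims), but the underlying argument is the same.
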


\begin{proof}
Firstly, we will show that $\widetilde{e}_{H}$ is an $RG$--homomorphism. We
start with proving that $\hat{H}g=g\hat{H}$for $g\in G$. Since for all $%
h_{i}\in H,$ there is $h_{ig}\in H$ such that $h_{i}g=gh_{ig},$ we have that 
$\hat{H}g=\sum\limits_{h_{i}\in H}h_{i}g=\sum\limits_{h_{i}\in H}gh_{ig}=g%
\hat{H}$. Therefore, $\frac{\hat{H}}{\left\vert H\right\vert }rg=rg\frac{%
\hat{H}}{\left\vert H\right\vert }$ and we have $\widetilde{e}_{H}(rgm)=rg%
\widetilde{e}_{H}(m)$ for $m\in MS$, $r\in R$ and $g\in G$. It is also clear
that $\widetilde{e}_{H}(m+n)=\widetilde{e}_{H}(m)+\widetilde{e}_{H}(n)$ for $%
m,n\in MS$, $g\in G$.

Secondly, by using the fact that $\hat{H}.\hat{H}=\left\vert H\right\vert .%
\hat{H}$, we get 
\begin{eqnarray*}
\widetilde{e}_{H}(\widetilde{e}_{H}(m)) &=&\widetilde{e}_{H}(\frac{\hat{H}}{%
\left\vert H\right\vert }m) \\
&=&\widetilde{e}_{H}(m)
\end{eqnarray*}%
So, $\widetilde{e}_{H}$ is an idempotent.

Finally, we prove that $\widetilde{e}_{H}$ is a central idempotent in $%
End_{RG}(MS)$. We will show that $\widetilde{e}_{H}$ commutes with every
element of $End_{RG}(MS)$. Let $f$ be in $End_{RG}(MS)$ and so $\hat{H}%
f(m)=f(\hat{H}m)$ for $m\in MS$. Thus, we have 
\begin{eqnarray*}
\widetilde{e}_{H}f(m) &=&\frac{\hat{H}}{\left\vert H\right\vert }f(m)\ \  \\
&=&f(\frac{\hat{H}}{\left\vert H\right\vert }m)=f\widetilde{e}_{H}(m).
\end{eqnarray*}
\end{proof}

For $\mu =\sum\limits_{g\in G}m_{g}g\in MG$ and $s_{i}\in S$, we write 
\begin{eqnarray*}
\mu s_{i} &=&\sum\limits_{g\in G}m_{g}(gs_{i}) \\
&=&\sum\limits_{gs_{i}\in S}m_{gs_{i}}(gs_{i})\in MS
\end{eqnarray*}%
Then for $i\in I$ and $\alpha \in M(Gs_{i})$, we write $\alpha
=\sum\limits_{gs_{i}\in Gs_{i}}m_{gs_{i}}gs_{i}$. Moreover, we write $\beta
=\sum\limits_{i\in I}\sum\limits_{gs_{i}\in Gs_{i}}m_{gs_{i}}gs_{i}$ for $%
\beta =\sum\limits_{s\in S}m_{s}s\in MS$ since $MS=M(\bigcup\limits_{i\in
I}Gs_{i})$.

Let $H$ be a normal subgroup of $G$. It is well known that on $G/H$ we have
the group action $g(tH)=gtH$ for $g,t\in G$. Consider $g(\sum\limits_{s\in
S}m_{s}(sH))=(\sum\limits_{s\in S}m_{s}(gsH))$ for $m_{s}\in M$.

Let $S^{\prime }$ $\subset S$ be a $G/H$--set. Then $S^{\prime
}=\bigcup\limits_{j\in J}G/Hs_{j}^{\prime }$ where $J$ denotes the index of
disjoint orbits of $S^{\prime }$ and $MS^{\prime }=M(\bigcup\limits_{j\in
J}G/Hs_{j}^{\prime })$. Then for $\eta =\sum\limits_{s^{\prime }\in
S^{\prime }}m_{s^{\prime }}s^{\prime }\in MS$, we can write $\eta
=\sum\limits_{j\in J}\sum\limits_{s^{\prime }\in G/Hs_{j}^{\prime
}}m_{s^{\prime }}s^{\prime }$.

Hence, we have the following result.

\begin{lemma}
Let $M$ be an $R$--module, $G$ a finite group, $H$ a normal subgroup of $G$, 
$S$ a $G$--set and $S^{\prime }$ $\subset S$ a $G/H$--set. Then $MS^{\prime
} $ is an $RG$--module with action defined as $g\eta =g(\sum\limits_{j\in
J}\sum\limits_{s^{\prime }\in G/Hs_{j}^{\prime }}m_{s^{\prime }}s^{\prime
})= $ $g(\sum\limits_{j\in J}\sum\limits_{s^{\prime }\in G/Hs_{j}^{\prime
}}m_{s^{\prime }}(tHs_{j}^{\prime })=\sum\limits_{j\in
J}\sum\limits_{s^{\prime }\in G/Hs_{j}^{\prime }}m_{s^{\prime
}}(gtHs_{j}^{\prime })$ where

$\eta =\sum\limits_{j\in J}\sum\limits_{s^{\prime }\in G/Hs_{j}^{\prime
}}m_{s^{\prime }}s^{\prime }\in MS^{\prime }$ and $s^{\prime
}=tHs_{j}^{\prime }$ for $t\in G$.
\end{lemma}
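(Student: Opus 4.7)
The plan is to verify that the prescribed formula for $g\eta$ makes $MS'$ an $RG$-module by reducing everything back to checking that $S'$ is a $G$-set under the action induced through the quotient $G\to G/H$; once that is in place, the $RG$-module structure on $MS'$ is just a special case of the general construction of $MS$ for a $G$-set $S$ given in the introduction.

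First I would address well-definedness. The formula writes each $s'\in S'$ as $tHs_j'$ for some $t\in G$ and some orbit representative $s_j'$, then declares $g\cdot s' = gtHs_j'$. The possible ambiguity comes from two sources: the choice of $t$ inside its coset $tH$, and, if $s_j'$ is replaced by another representative of the same $G/H$-orbit, the resulting rewriting of $s'$. For the first, $t_1 H=t_2 H$ gives $gt_1 H = gt_2 H$ because $H$ is normal (in fact, left multiplication always preserves left cosets, so normality is not even needed here; it is needed implicitly to make sense of $G/H$ as a group on which $G$ acts by left multiplication). For the second, if $t_1Hs_j'=t_2Hs_j'$ as elements of $S'$, then by the $G/H$-set structure this is already an equality, and applying $g$ on both sides in $G/H$ preserves it, so the image $gtHs_j'$ is independent of the representative. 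Thus the map $G\times S'\to S'$ is well-defined.

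Next I would verify the two $G$-set axioms on $S'$: $eHs_j'=Hs_j'$, so $e$ acts trivially; and for $g,g'\in G$, $(gg')\cdot (tHs_j') = (gg't)Hs_j' = g\cdot (g'tHs_j') = g\cdot (g'\cdot (tHs_j'))$, which uses only associativity in $G$ and the fact that $G/H$ is a group with the natural left $G$-action. This is the step where one sees that the hypothesis ``$H$ is normal'' is really being used, because it guarantees $G/H$ is a group so that the coset products $gt\cdot H$ do not depend on the chosen representative.

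Having shown that $S'$ is a $G$-set under the induced action, I would finish by invoking the construction from the opening of the paper: for any $G$-set $T$ and any $R$-module $M$, the formal sums $MT$ form an $RG$-module under componentwise addition, the obvious $R$-scalar action, and the $RG$-action $(\sum_g r_g g)(\sum_{s\in T} m_s s) = \sum_{g,s} r_g m_s (gs)$. Specializing $T=S'$ with the induced action gives exactly the formula stated in the lemma after collecting terms by the orbit decomposition $S'=\bigcup_{j\in J} (G/H)s_j'$. I would state this explicitly and note that the linearity and associativity axioms need not be re-derived, since they are already part of the general $G$-set module construction. I expect no serious obstacle beyond the bookkeeping around well-definedness; the main conceptual content is the observation that ``$G/H$-set'' plus ``$H\triangleleft G$'' together produce an honest $G$-set structure that the general theory can be applied to.
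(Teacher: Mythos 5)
Your proposal is correct and follows the same route the paper intends: the paper offers no explicit proof of this lemma, merely deriving it ("Hence, we have the following result") from the preceding observation that $G$ acts on $G/H$ by $g(tH)=gtH$, so that the $G/H$-action on $S'$ pulls back along $G\to G/H$ to make $S'$ a $G$-set and $MS'$ an $RG$-module by the general construction from the introduction. Your write-up simply supplies the well-definedness and axiom checks that the paper leaves implicit, and correctly identifies that normality of $H$ is what makes $G/H$ a group so that the pulled-back action is well defined.
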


\begin{theorem}
\label{a1}Let $H$ be a normal subgroup of $G$, $\left\vert H\right\vert $
invertible in $R$ and $\widetilde{e}_{H}$, defined above, then we have $MS=%
\widetilde{e}_{H}.MS\oplus (1-\widetilde{e}_{H}).MS$ and there exists a $G/H$%
--set $S^{\prime }$ $\subset S$ such that $\widetilde{e}_{H}.MS\simeq
MS^{\prime }$. More precisely, 
\begin{equation*}
\widetilde{e}_{H}.MS=\widetilde{e}_{H}\left( M(\bigcup\limits_{i\in
I}Gs_{i})\right) \simeq M(\bigcup\limits_{i\in I}\widetilde{e}_{H}Gs_{i})
\end{equation*}
\end{theorem}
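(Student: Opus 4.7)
The decomposition part falls out formally from Lemma~\ref{idem}, so I would dispose of it first: since $\widetilde{e}_{H}$ is a central idempotent in $End_{RG}(MS)$, the element $1-\widetilde{e}_{H}$ is also an idempotent orthogonal to it, and the standard argument for central idempotents yields the direct sum $MS = \widetilde{e}_{H}\cdot MS \oplus (1-\widetilde{e}_{H})\cdot MS$ as $RG$--modules.

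For the identification, I would set $S' := \widetilde{e}_{H}\cdot S = \{\widetilde{e}_{H} s : s \in S\} \subset RS$; concretely, each element has the shape $\frac{1}{|H|}\sum_{h \in H} hs$. Two properties make $S'$ a $G/H$--set. From $H \trianglelefteq G$ one obtains $g\hat{H} = \hat{H}g$ (exactly as used in the proof of Lemma~\ref{idem}), so $g\cdot \widetilde{e}_{H}s = \widetilde{e}_{H}(gs) \in S'$ and the $G$--action on $RS$ restricts to $S'$. For $h \in H$, the identity $\hat{H}h = \hat{H}$ gives $h\cdot \widetilde{e}_{H}s = \widetilde{e}_{H}s$, so the action factors through $G/H$. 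Writing $S = \bigcup_{i \in I} Gs_{i}$, centrality of $\widetilde{e}_{H}$ then produces the orbit decomposition $S' = \bigcup_{i \in I} \widetilde{e}_{H}Gs_{i}$ promised in the more precise form of the statement.

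Finally I would exhibit the $RG$--module isomorphism $\phi : MS' \longrightarrow \widetilde{e}_{H}\cdot MS$ defined by $\sum_{s' \in S'} m_{s'} s' \longmapsto \sum_{s' \in S'} m_{s'} s'$, where on the right each $s'$ is read as the explicit element $\frac{1}{|H|}\sum_{h \in H} hs$ of $RS \subset MS$. The image lies in $\widetilde{e}_{H}\cdot MS$ because $\widetilde{e}_{H}s' = s'$ for every $s' \in S'$, and $R$--linearity together with $G$--equivariance follows directly from the relation $g\cdot s' = \widetilde{e}_{H}(gs)$ in $S'$. Surjectivity is cheap: any $\widetilde{e}_{H}\eta$ with $\eta = \sum m_{s} s$ rewrites as $\sum_{s \in S} m_{s} \widetilde{e}_{H}s$, which re--indexes to a formal sum over $S'$. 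I expect the main obstacle to be injectivity, which is where the hypothesis $|H|^{-1} \in R$ is genuinely needed: distinct elements of $S'$ are supported on disjoint $H$--orbits of $S$, so for a given $s \in S$ the coefficient of $s$ in $\phi(\sum m_{s'} s')$ equals $m_{s'}/|H|$ for the unique $s' = \widetilde{e}_{H}s$; vanishing in $MS$ therefore forces $m_{s'} = 0$ once the denominator is cleared.
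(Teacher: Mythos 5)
Your proof is correct and, for the direct-sum part, identical to the paper's: both invoke the centrality and idempotence of $\widetilde{e}_{H}$ from Lemma~\ref{idem}. For the identification $\widetilde{e}_{H}\cdot MS\simeq MS^{\prime}$ you take a more explicit and self-contained route. The paper cites the analogous decomposition of $MG$ from its reference on modules over group rings, then passes through the group epimorphism $\theta:G\longrightarrow G\widetilde{e}_{H}$, $g\mapsto g\widetilde{e}_{H}$, computes $\ker\theta=H$ to get $G/H\simeq G\widetilde{e}_{H}$, and from there asserts the chain $\widetilde{e}_{H}\bigl(M(\bigcup_{i}Gs_{i})\bigr)=M(\bigcup_{i}G\widetilde{e}_{H}s_{i})\simeq M(\bigcup_{i}(G/H)s_{i})=MS^{\prime}$ without exhibiting the module map. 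You instead realize $S^{\prime}$ concretely as $\widetilde{e}_{H}S$, verify directly that the $G$--action descends to $G/H$ via $h\hat{H}=\hat{H}$, and write down the isomorphism $\phi$ together with a surjectivity and injectivity check; the injectivity argument, which is where $|H|^{-1}\in R$ actually enters, is entirely absent from the paper's proof, so your version fills a real gap in the exposition. Two small caveats. First, in the injectivity step the coefficient of $s$ in $\widetilde{e}_{H}s=\frac{1}{|H|}\sum_{h\in H}hs$ is $|\mathrm{Stab}_{H}(s)|/|H|$, not $1/|H|$, since $H$ need not act freely on $Hs$; the conclusion survives because $|\mathrm{Stab}_{H}(s)|$ divides the invertible element $|H|$ and is therefore itself invertible in $R$. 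Second, your $S^{\prime}$ lives in $RS$ rather than in $S$, so it is not literally a subset of $S$ as the statement demands --- but the paper's own $S^{\prime}$, whose elements are $H$--orbits $gHs_{j}$, has the same defect, and the two realizations are canonically the same $G/H$--set.
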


\begin{proof}
Firstly, we know that $MG=\widetilde{e}_{H}.MG\oplus (1-\widetilde{e}%
_{H}).MG $ and $\widetilde{e}_{H}.MG\simeq M(G/H)$ by the theorem in \cite%
{ucalkan}. Since $\widetilde{e}_{H}$ is a central idempotent by Lemma \ref%
{idem}, we get $MS=\widetilde{e}_{H}.MS\oplus (1-\widetilde{e}_{H}).MS$.
Now, consider $\theta :G\longrightarrow G.\widetilde{e}_{H}$ where $g\mapsto
g\widetilde{e}_{H}$. This is a group homomorphism since $\theta (gh)=gh%
\widetilde{e}_{H}=gh\widetilde{e}_{H}^{2}=g\widetilde{e}_{H}h\widetilde{e}%
_{H}=\theta (g)\theta (h)$. It is clear that $\theta $ is a group
epimorphism. We have $ker\theta =\left\{ g\in G\mid g\widetilde{e}_{H}=%
\widetilde{e}_{H}\right\} =\left\{ g\in G\mid (g-1)\widetilde{e}%
_{H}=0\right\} =H$ since $(g-1)\frac{\hat{H}}{\left\vert H\right\vert }=0$
and $g\hat{H}=\hat{H}$ for $g\in H$. Moreover, we get $\frac{G}{er\theta }=%
\frac{G}{H}\simeq $ Im$\theta $ $=G\widetilde{e}_{H}$. So, 
\begin{equation*}
\widetilde{e}_{H}.MS=\widetilde{e}_{H}\left( M(\bigcup\limits_{i\in
I}Gs_{i})\right) =M(\bigcup\limits_{i\in I}G\widetilde{e}_{H}s_{i})\simeq
M(\bigcup\limits_{i\in I}(G/H)s_{i})
\end{equation*}%
Since $gHs_{i}=gHs_{l}$ for $s_{i},s_{l}\in S$, $i,l\in I$, we get a $G/H$%
--set $S^{\prime }$ $\subset S$ where $\bigcup\limits_{j\in
J}(G/H)s_{j}=S^{\prime }\subseteq S$. Hence 
\begin{equation*}
\widetilde{e}_{H}.MS\simeq M(\bigcup\limits_{i\in
I}(G/H)s_{i})=M(\bigcup\limits_{j\in J}(G/H)s_{j})=MS^{\prime }
\end{equation*}%
So, $\widetilde{e}_{H}.MS\simeq MS^{\prime }$.
\end{proof}

\begin{theorem}
Let $M$ be an $R$--module and $G$ a finite group. For a $G$--set $%
S=\bigcup\limits_{i\in I}Gs_{i}$ ($I$ denotes the index of disjoint orbits
of $S)$, $MS\simeq \bigoplus\limits_{i\in I}MG\backslash \ker \theta _{i}$
where $\theta _{i}:MG\longrightarrow MGs_{i}$ are $RG$--epimorphisms.
\end{theorem}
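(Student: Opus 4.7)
The plan is to split the proof into an internal orbit decomposition of $MS$ followed by the first isomorphism theorem applied to each orbit.

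First I would exploit the orbit decomposition $S=\bigcup_{i\in I}Gs_i$, where the orbits are pairwise disjoint (this is recorded after Lemma \ref{mingset}). Since any $\mu=\sum_{s\in S}m_s s\in MS$ has finite support and the orbits partition $S$, the element $\mu$ decomposes uniquely as $\mu=\sum_{i\in I}\mu_i$ with $\mu_i=\sum_{s\in Gs_i}m_s s\in M(Gs_i)$. The action of $g\in G$ permutes each orbit $Gs_i$ into itself, so $g\mu_i\in M(Gs_i)$; combined with the earlier observation (case $N=M$ in the lemma following Lemma \ref{mingset}) that each $M(Gs_i)$ is an $RG$-submodule of $MS$, this gives the internal direct sum decomposition
\begin{equation*}
MS=\bigoplus_{i\in I}M(Gs_i)
\end{equation*}
as $RG$-modules.

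Next, for each $i\in I$ I would define
\begin{equation*}
\theta_i:MG\longrightarrow M(Gs_i),\qquad \sum_{g\in G}m_g g\longmapsto \sum_{g\in G}m_g(gs_i).
\end{equation*}
This is clearly $R$-linear and additive, and for $h\in G$ one checks
\begin{equation*}
\theta_i\!\left(h\sum_{g\in G}m_g g\right)=\sum_{g\in G}m_g(hg)s_i=h\sum_{g\in G}m_g(gs_i)=h\,\theta_i\!\left(\sum_{g\in G}m_g g\right),
\end{equation*}
so $\theta_i$ is an $RG$-homomorphism. Surjectivity is immediate: any $\sum_{g\in G}m_g(gs_i)\in M(Gs_i)$ is by definition the image of $\sum_{g\in G}m_g g\in MG$ (the representation may not be unique when the stabilizer of $s_i$ is nontrivial, but that only affects $\ker\theta_i$, not surjectivity). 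Hence by the first isomorphism theorem, $MG/\ker\theta_i\simeq M(Gs_i)$ as $RG$-modules.

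Combining the two steps yields
\begin{equation*}
MS=\bigoplus_{i\in I}M(Gs_i)\simeq\bigoplus_{i\in I}MG/\ker\theta_i,
\end{equation*}
which is the claim. The only subtle point is the first step: one must be careful that the decomposition of $MS$ into the orbit summands is genuinely an $RG$-direct sum and not merely an $R$-direct sum. Since $G$ preserves each orbit setwise, the summands are $RG$-invariant, so this is in fact the easier half; the rest is a routine first-isomorphism-theorem argument, and I expect no real obstacle beyond bookkeeping.
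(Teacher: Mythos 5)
Your proposal is correct and follows essentially the same route as the paper: decompose $MS$ as the internal direct sum $\bigoplus_{i\in I}M(Gs_i)$ over the disjoint orbits, define the evaluation maps $\theta_i:\sum_g m_g g\mapsto\sum_g m_g(gs_i)$, check they are surjective $RG$--homomorphisms, and apply the first isomorphism theorem. Your added care in verifying that each orbit summand is $RG$--invariant (so the direct sum is one of $RG$--modules, not merely $R$--modules) and your remark that non-uniqueness of the representation only affects $\ker\theta_i$ are minor refinements of the paper's argument, not a different approach.
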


\begin{proof}
Since $MGs_{i}\cap MGs_{j}=\emptyset $ for $i\neq j\in I$ where $%
S=\bigcup\limits_{i\in I}Gs_{i}$ and $I$ denotes the index of disjoint
orbits of $S$ , we have $MS=M(\bigcup\limits_{i\in
I}Gs_{i})=\bigoplus\limits_{i\in I}MGs_{i}$.

Consider 
\begin{equation*}
\begin{array}{cccc}
\theta _{i}: & MG & \longrightarrow & MGs_{i}%
\end{array}%
\text{, }%
\begin{array}{ccc}
\sum\limits_{g\in G}m_{g}g & \longmapsto & \sum\limits_{g\in G}m_{g}gs_{i}%
\end{array}%
\end{equation*}%
For $\mu =\sum\limits_{g\in G}m_{g}g\in MG$, $r\in R$, $h\in G$, we have 
\begin{eqnarray*}
\theta _{i}(r\mu ) &=&\theta _{i}(r\sum\limits_{g\in G}m_{g}g)=\theta
_{i}(\sum\limits_{g\in G}rm_{g}g)=\sum\limits_{g\in G}rm_{g}gs_{i} \\
&=&r\sum\limits_{g\in G}m_{g}gs_{i}=r\theta _{i}(\sum\limits_{g\in
G}m_{g}g)=r\theta _{i}(\mu )\text{.}
\end{eqnarray*}%
\begin{eqnarray*}
\theta _{i}(h\mu ) &=&\theta _{i}(h\sum\limits_{g\in G}m_{g}g)=\theta
_{i}(\sum\limits_{g\in G}m_{g}hg)=\sum\limits_{g\in G}m_{g}hgs_{i} \\
&=&h(\sum\limits_{g\in G}m_{g}gs_{i})=h\theta _{i}(\sum\limits_{g\in
G}m_{g}g)=h\theta _{i}(\mu ).
\end{eqnarray*}%
Hence, $\theta _{i}$ is an $RG$--homomorphism. It is clear that $\theta _{i}$
is an epimorphism. Moreover, $MG\backslash \ker \theta _{i}\simeq \func{Im}%
\theta _{i}=MGs_{i}$. Then,%
\begin{equation*}
MS=M(\bigcup\limits_{i\in I}Gs_{i})=\bigoplus\limits_{i\in I}MGs_{i}\simeq
\bigoplus\limits_{i\in I}MG\backslash \ker \theta _{i}\text{.}
\end{equation*}
\end{proof}

\section{Augmentation Map on MS and Semisimple $G$--set Modules}

In the theory of the group ring, the augmentation ideal denoted by $%
\triangle (RG)$ is the kernel of the usual augmentation map $\varepsilon
_{R} $ such that 
\begin{equation*}
\begin{array}{cccc}
\varepsilon _{R}: & RG & \longrightarrow & R%
\end{array}%
\text{, }%
\begin{array}{ccc}
\sum\limits_{g\in G}r_{g}g & \longmapsto & \sum\limits_{g\in G}r_{g}%
\end{array}%
.
\end{equation*}%
The augmentation ideal is always the nontrivial two-sided ideal of the group
ring and we have $\triangle (RG)=\left\{ \sum\limits_{g\in
G}r_{g}(g-1):r_{g}\in R,g\in G\right\} $. The augmentation ideal $\triangle
(RG)$ is of use for studying not only the relationship between the subgroups
of $G$ and the ideals of $RG$ but also the decomposition of $RG$ as direct
sum of subrings.

In \cite{Kosan}, $\varepsilon _{R}$ is extended to the following
homomorphism of $R$--modules 
\begin{equation*}
\begin{array}{cccc}
\varepsilon _{M}: & MG & \longrightarrow & M%
\end{array}%
\text{,\ }%
\begin{array}{ccc}
\sum\limits_{g\in G}m_{g}g & \longmapsto & \sum\limits_{g\in G}m_{g}%
\end{array}%
.
\end{equation*}%
The kernel of $\varepsilon _{M}$ is denoted by $\triangle (MG)$ and 
\begin{equation*}
\triangle (MG)=\left\{ \sum\limits_{g\in G}m_{g}(g-1):m_{g}\in M,g\in
G\right\} .
\end{equation*}
We devote this section to $\varepsilon _{MS}$ that is an extension of $%
\varepsilon _{M}$, and to the kernel of $\varepsilon _{MS}$ denoted by $%
\triangle _{G}(MS)$.

\begin{definition}
The map 
\begin{equation*}
\begin{array}{clcl}
\varepsilon _{MS}: & MS & \longrightarrow & M%
\end{array}%
\text{, }%
\begin{array}{lcl}
\dsum\limits_{s\in S}m_{s}s & \longmapsto & \dsum\limits_{s\in S}m_{s}%
\end{array}%
\end{equation*}%
is called augmentation map on $MS$\textbf{.\ }
\end{definition}

In addition, $\varepsilon _{MS}(m_{s}s_{1})=\varepsilon
_{MS}(m_{s}s_{2})=m_{s}$ for $m_{s}s_{1}$, $m_{s}s_{2}\in MS$ where $%
m_{s}\in M$, $s_{1},s_{2}\in S$, however $m_{s}s_{1}\neq m_{s}s_{2}$. Hence, 
$\varepsilon _{MS}$ is not one-to-one.

\begin{lemma}
\bigskip Let $M$ be an $R$--module, $G$ a group and $S$ a $G$--set. Then $%
\varepsilon _{MS}(r\mu )$ $=$ $\varepsilon (r)$ $\varepsilon _{MS}(\mu )$
for $\mu =\dsum\limits_{s\in S}m_{s}s\in MS$, $r=\dsum\limits_{g\in
G}r_{g}g\in RG$. In particular, $\varepsilon _{MS}$ is an $R$--homomorphism.
\end{lemma}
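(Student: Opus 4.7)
The plan is to verify the identity by unwinding the definition of the $RG$-action on $MS$ and regrouping the resulting double sum. First I would expand $r\mu$ using the scalar product formula given at the start of the paper: writing $r = \sum_{g \in G} r_g g$ and $\mu = \sum_{s \in S} m_s s$, the product $r\mu$ is a finite sum of terms of the form $r_g m_s (gs)$ (or $(sg)$, according to the convention fixed in the introduction) indexed by pairs $(g,s) \in G \times S$. Regrouping by the resulting element $s' \in S$, one obtains $r\mu = \sum_{s' \in S}\bigl(\sum_{(g,s):\, gs = s'} r_g m_s\bigr)s'$.

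Next I would apply $\varepsilon_{MS}$ to this expression. By the definition of $\varepsilon_{MS}$, the image is obtained by simply adding up all the coefficients, and since the coefficients are indexed (after regrouping) by the same set of pairs $(g,s) \in G \times S$, we get
\begin{equation*}
\varepsilon_{MS}(r\mu) \;=\; \sum_{g \in G}\sum_{s \in S} r_g m_s \;=\; \Bigl(\sum_{g \in G} r_g\Bigr)\Bigl(\sum_{s \in S} m_s\Bigr) \;=\; \varepsilon(r)\,\varepsilon_{MS}(\mu),
\end{equation*}
where the middle equality is the distributivity of the $R$-action on $M$. This establishes the first claim.

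For the in-particular statement that $\varepsilon_{MS}$ is an $R$-homomorphism, additivity is immediate from the componentwise definition of the sum in $MS$, and $R$-linearity follows from the displayed identity by taking $r = r \cdot e \in RG$, so that $\varepsilon(r) = r$ and $\varepsilon_{MS}(r\mu) = r\,\varepsilon_{MS}(\mu)$.

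There is no genuine obstacle here; the only point that needs care is the bookkeeping in the regrouping step, namely noticing that summing first over $s' \in S$ with the inner sum over $\{(g,s) : gs = s'\}$ and then collapsing back to a sum over all of $G \times S$ is just Fubini for finite sums. The argument does not use any property of the $G$-action on $S$ beyond the fact that $G \times S \to S$ is a well-defined map, which is precisely why the augmentation behaves the same way here as in the classical cases $\varepsilon_R$ and $\varepsilon_M$.
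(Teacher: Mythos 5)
Your proposal is correct and follows essentially the same route as the paper: expand $r\mu$ via the definition of the $RG$-action, apply $\varepsilon_{MS}$ to sum all coefficients, and factor the resulting double sum as $\bigl(\sum_{g}r_{g}\bigr)\bigl(\sum_{s}m_{s}\bigr)$. Your regrouping over pairs $(g,s)$ with $gs=s'$ is in fact spelled out more carefully than in the paper's own proof, and your derivation of $R$-linearity from the main identity via $r=r\cdot e$ is a harmless variant of the paper's direct verification.
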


\begin{proof}
Let $\mu =\dsum\limits_{s\in S}m_{s}s\in MS$, $r=\dsum\limits_{g\in
G}r_{g}g\in RG$, then%
\begin{eqnarray*}
\varepsilon _{MS}(r\mu ) &=&\varepsilon _{MS}\left( \dsum\limits_{gs\in
S}(r_{g}m_{s})(gs)\right) \\
&=&\varepsilon _{MS}\left( \dsum\limits_{s^{\prime }\in S}m_{s^{\prime
}}s^{\prime }\right) ,\text{ }m_{s^{\prime }}=r_{g}m_{s},gs=s^{\prime }\in S,
\\
&=&\left( \dsum\limits_{g\in G}r_{g}\right) \left( \dsum\limits_{s\in
S}m_{s}\right) \\
&=&\varepsilon (r)\varepsilon _{MS}(\mu ).
\end{eqnarray*}%
In addition, for $\mu =\dsum\limits_{s\in S}m_{s}s$, $\eta
=\dsum\limits_{s\in S}n_{s}s\in MS$, $t\in R$, 
\begin{eqnarray*}
\varepsilon _{MS}(\mu +\eta ) &=&\varepsilon _{MS}(\dsum\limits_{s\in
S}\left( m_{s}+n_{s}\right) s) \\
&=&\dsum\limits_{s\in S}m_{s}+\dsum\limits_{s\in S}n_{s}
\end{eqnarray*}%
\begin{eqnarray*}
\varepsilon _{MS}(t\mu ) &=&\varepsilon _{MS}(\dsum\limits_{s\in S}\left(
tm_{s}\right) s) \\
&=&t\dsum\limits_{s\in S}m_{s}
\end{eqnarray*}
\end{proof}

Furhermore, 
\begin{equation*}
ker(\varepsilon _{MS})=\{\mu =\dsum\limits_{s\in S}m_{s}s\in MS\mid
\varepsilon _{MS}(\mu )=\varepsilon _{MS}(\dsum\limits_{s\in
S}m_{s}s)=\dsum\limits_{s\in S}m_{s}=0\}.
\end{equation*}%
It is clear that $ker(\varepsilon _{MS})\neq 0$ because for $%
m_{s}s_{1}+(-m_{s}s_{2})\in MS$, where $m\in M$, $s_{1}\neq s_{2}\in S$, we
have 
\begin{eqnarray*}
\varepsilon _{MS}(m_{s}s_{1}+(-m_{s}s_{2})) &=&\varepsilon
_{MS}(m_{s}s_{1})+\varepsilon _{MS}(-m_{s}s_{2}) \\
&=&0
\end{eqnarray*}%
Thus, $m_{s}s_{1}+(-m_{s}s_{2})\in er(\varepsilon _{MS})$. Moreover, we will
characterize the elements of the kernel of $\varepsilon _{MS}$ in detail.
For this purpose, we define $\triangle _{G,H}(MS)=\{\sum\limits_{h\in
H}(h-1)\mu _{h}\mid \mu _{h}\in MS\}$ where $H $ is a subgroup of finite
group $G$.

\begin{theorem}
Let $M$ be an $R$--module, $H$ a subgroup of $G$, $\left\vert H\right\vert $
invertible in $R$, $S$ a $G$--set and $\widetilde{e}_{H}$, defined in Lemma %
\ref{idem}. Then, $\triangle _{G,H}(MS)$ is an $RG$--module and $\triangle
_{G,H}(MS)=(1-\widetilde{e}_{H}).MS$.
\end{theorem}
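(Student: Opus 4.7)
The plan is to reduce the theorem to two basic identities about $\widetilde{e}_{H}$: first, that $\widetilde{e}_{H}(h-1)=0$ for every $h\in H$, and second, that $1-\widetilde{e}_{H} = -\tfrac{1}{|H|}\sum_{h\in H}(h-1)$ in $RG$. The first identity comes from the re-indexing $\hat{H}h = \sum_{h'\in H} h'h = \hat{H}$ (since right multiplication by $h$ permutes $H$); dividing by $|H|$ gives $\widetilde{e}_{H}h = \widetilde{e}_{H}$, and hence $\widetilde{e}_{H}(h-1)=0$. The second is just the rearrangement $\sum_{h\in H}(h-1) = \hat{H} - |H|\cdot 1 = |H|(\widetilde{e}_{H}-1)$.

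With these identities in hand, I would prove the equality $\triangle_{G,H}(MS) = (1-\widetilde{e}_{H})\cdot MS$ by two inclusions. For $(1-\widetilde{e}_{H})MS \subseteq \triangle_{G,H}(MS)$, given $\nu \in MS$, the second identity lets me write $(1-\widetilde{e}_{H})\nu = \sum_{h\in H}(h-1)\mu_{h}$ with $\mu_{h} = -\tfrac{1}{|H|}\nu$ for every $h\in H$, which is manifestly an element of $\triangle_{G,H}(MS)$. Conversely, for a typical generator $(h-1)\mu_{h}$ of $\triangle_{G,H}(MS)$, the first identity gives $\widetilde{e}_{H}\cdot (h-1)\mu_{h}=0$, so $(h-1)\mu_{h} = (1-\widetilde{e}_{H})\cdot(h-1)\mu_{h} \in (1-\widetilde{e}_{H})MS$; summing over $h$ yields the reverse inclusion.

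Finally, the $RG$-module assertion becomes automatic: by Lemma \ref{idem}, $\widetilde{e}_{H}$ (and hence $1-\widetilde{e}_{H}$) is a central idempotent in $End_{RG}(MS)$, so its image $(1-\widetilde{e}_{H})MS$ is an $RG$-submodule of $MS$, and the just-established equality transfers this structure to $\triangle_{G,H}(MS)$. The main delicate point — and the one most likely to cause trouble — is the tacit hypothesis that $H$ is \emph{normal} in $G$: normality is what makes $\hat{H}g = g\hat{H}$, hence $\widetilde{e}_{H}$ central in $RG$, hence Lemma \ref{idem} applicable, and it is also what makes $\triangle_{G,H}(MS)$ closed under the left $G$-action directly, via $g(h-1)\mu_{h} = (ghg^{-1}-1)\cdot g\mu_{h}$ with $ghg^{-1}\in H$. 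Without this assumption neither the central-idempotent argument nor the $RG$-submodule claim survives, so the statement should be understood as carrying the normality hypothesis of Lemma \ref{idem}.
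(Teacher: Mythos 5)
Your proof is correct and follows essentially the same route as the paper's: both directions ultimately rest on the identity $1-\widetilde{e}_{H}=-\tfrac{1}{\left\vert H\right\vert }\sum_{h\in H}(h-1)$, which is exactly how the paper shows $(1-\widetilde{e}_{H})MS\subseteq \triangle _{G,H}(MS)$. Your handling of the reverse inclusion via $\widetilde{e}_{H}(h-1)=0$, so that $\sum_{h}(h-1)\mu _{h}=(1-\widetilde{e}_{H})\sum_{h}(h-1)\mu _{h}$, is actually more complete than the paper's, which dismisses that direction with a single ``Similarly'' even though the generators of $\triangle _{G,H}(MS)$ carry $h$--dependent coefficients $\mu _{h}$ and so are not literally of the form produced by the displayed computation. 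Your remark on normality is also well taken: the theorem assumes only $H\leq G$, yet Lemma \ref{idem} (hence the centrality of $\widetilde{e}_{H}$) and the closure of $\triangle _{G,H}(MS)$ under the $G$--action via $g(h-1)=(ghg^{-1}-1)g$ both require $H$ normal, so the normality hypothesis should indeed be read into the statement, while the set--theoretic equality itself holds for an arbitrary subgroup.
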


\begin{proof}
$\triangle _{G,H}(MS)$ is obviously an $RG$--module. Now, take any element $%
\alpha \in \triangle _{G,H}(MS)$. Then we get 
\begin{eqnarray*}
\alpha &=&\sum\limits_{h\in H}(h-1)\mu _{h} \\
&=&\sum\limits_{h\in H}(h-1)(\sum\limits_{s\in S}m_{s}s) \\
&=&\sum\limits_{h\in H}(\sum\limits_{s\in S}m_{s}(h-1)s) \\
&=&\sum\limits_{h\in H}(\sum\limits_{s\in S}m_{s}(hs-s)) \\
&=&\sum\limits_{h\in H}(\sum\limits_{s\in S}m_{s}(hs-1)-(s-1)).
\end{eqnarray*}%
On the other hand, for any element $\beta \in (1-\widetilde{e}_{H}).MS$%
\begin{eqnarray*}
\beta &=&(1-\widetilde{e}_{H})\eta \\
&=&(1-\widetilde{e}_{H})(\sum\limits_{s\in S}n_{s}s) \\
&=&(1-\frac{\hat{H}}{\left\vert H\right\vert })(\sum\limits_{s\in S}n_{s}s)
\\
&=&-\frac{1}{\left\vert H\right\vert }(\sum\limits_{h\in
H}(h-1))(\sum\limits_{s\in S}n_{s}s) \\
&=&(\sum\limits_{h\in H}(h-1))(\sum\limits_{s\in S}n_{s}^{\prime }s) \\
&=&\sum\limits_{h\in H}(h-1)(\sum\limits_{s\in S}n_{s}^{\prime }s) \\
&=&\sum\limits_{h\in H}(\sum\limits_{s\in S}n_{s}^{\prime }(hs-1)-(s-1))
\end{eqnarray*}%
where $\eta \in MS$, $n_{s}^{\prime }=-\frac{1}{\left\vert H\right\vert }%
n_{s}$. Hence, $\beta \in \triangle _{G,H}(MS)$. Similarly, $\alpha \in
MS.(1-\widetilde{e}_{H})$.
\end{proof}

Furthermore, we write $\triangle _{G,G}(MS)=\triangle _{G}(MS)$. It is clear
that $ker(\varepsilon _{MS})=\triangle _{G}(MS)$ and we have $%
ker(\varepsilon _{MS})=\triangle _{G}(MS)=(1-\widetilde{e}_{G}).MS$.

Recall that $\triangle _{R}(G)$ is the augmetation ideal of $RG$ and for a
normal subgroup $N$ of $G$, $\triangle _{R}(G,N)$ denote the kernel of the
natural epimorphism $RG\longrightarrow R(G/N)$ induced by $G\longrightarrow
G/N$. Moreover, $\triangle _{R}(G,N)$ is a two-sided ideal of $RG$ generated
by $\triangle _{R}(N)$.

\begin{theorem}
If $N$ is a normal subgroup of $G$, then $\triangle _{G,N}(MS)=\triangle
_{R}(N).MS$.
\end{theorem}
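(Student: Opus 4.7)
The plan is to prove the two inclusions directly, essentially by unwinding the definitions; no deep machinery appears to be required.

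First I would establish the inclusion $\triangle_{G,N}(MS) \subseteq \triangle_R(N)\cdot MS$. Take an arbitrary element $\alpha = \sum_{n\in N}(n-1)\mu_n$ with $\mu_n\in MS$. For each $n\in N$ the element $n-1\in RG$ belongs to $\triangle_R(N)$ (regarded as an $R$-submodule of $RG$), so every summand $(n-1)\mu_n$ is already of the shape (element of $\triangle_R(N)$) $\cdot$ (element of $MS$). Hence $\alpha\in \triangle_R(N)\cdot MS$. This direction is immediate.

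Second, I would prove the reverse inclusion $\triangle_R(N)\cdot MS\subseteq \triangle_{G,N}(MS)$. A typical element of $\triangle_R(N)\cdot MS$ is a finite sum $\sum_i \alpha_i\mu_i$ with $\alpha_i\in\triangle_R(N)$ and $\mu_i\in MS$. Write each $\alpha_i$ in the generating form $\alpha_i=\sum_{n\in N} r_n^{(i)}(n-1)$ with $r_n^{(i)}\in R$. Because $R$ is commutative and lies in the centre of $RG$, the scalar $r_n^{(i)}$ can be moved across $n-1$, giving
\begin{equation*}
r_n^{(i)}(n-1)\mu_i \;=\; (n-1)\bigl(r_n^{(i)}\mu_i\bigr).
\end{equation*}
Collecting terms by $n$, the whole sum becomes $\sum_{n\in N}(n-1)\nu_n$ with $\nu_n=\sum_i r_n^{(i)}\mu_i\in MS$, which is precisely an element of $\triangle_{G,N}(MS)$.

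I do not expect any serious obstacle. The only subtle point worth flagging is the role of normality of $N$: the raw set equality holds for any subgroup $N\le G$ by the argument above, but normality is what ensures both sides are genuine $RG$-submodules of $MS$ (indeed, for a non-normal $N$ the $G$-action on $(n-1)\mu$ produces $(gng^{-1}-1)(g\mu)$, which stays inside the set only when $gng^{-1}\in N$). Since the theorem is stated for $N$ normal, no extra care is needed beyond the two-line calculation above, and the proof reduces to the observation that $\triangle_R(N)$ is $R$-spanned by the elements $n-1$ with $n\in N$.
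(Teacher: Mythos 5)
Your proof is correct and takes essentially the same route as the paper's: the key step in both is moving the central scalar $r_{n}$ across $n-1$ to rewrite $\alpha \mu $ as $\sum_{n\in N}(n-1)\mu _{n}$. You are in fact slightly more complete than the paper, which only writes out this harder inclusion (and only for a single product $\alpha \mu $ rather than a finite sum), leaving the easy containment $\triangle _{G,N}(MS)\subseteq \triangle _{R}(N)\cdot MS$ implicit.
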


\begin{proof}
We know that $\triangle _{R}(N)=\{\sum\limits_{n\in N}r_{n}(n-1)\mid
r_{n}\in R\}$ and $\triangle _{G,H}(MS)=\{\sum\limits_{h\in H}(h-1)\mu
_{h}\mid \mu _{h}\in MS\}$. For $\alpha =\sum\limits_{n\in N}r_{n}(n-1)\in
\triangle _{R}(N)$, $\mu =\sum\limits_{s\in S}m_{s}s\in MS$,%
\begin{eqnarray*}
\alpha \mu &=&\left( \sum\limits_{n\in N}r_{n}(n-1)\right) \left(
\sum\limits_{s\in S}m_{s}s\right) \\
&=&\sum\limits_{n\in N}r_{n}(n-1)\left( \sum\limits_{s\in S}m_{s}s\right) \\
&=&\sum\limits_{n\in N}(n-1)\left( \sum\limits_{s\in S}(r_{n}m_{s})s\right)
\\
&=&\sum\limits_{n\in N}(n-1)\mu _{n}
\end{eqnarray*}%
where $\mu _{n}=\sum\limits_{s\in S}(r_{n}m_{s})s\in MS$.
\end{proof}

In examination of the studies in group rings which make use of the theory of
group modules (see \cite{Con}, \cite{Kosan}, \cite{ucalkan}), the
semisimplicity problem of the $G$--set module arises. In \cite{Con}, the
generalized Maschke's Theorem states that a group ring $RG$ is a semisimple
Artinian ring if and only if $R$ is a semisimple Artinian ring, $G$ is
finite and $\left\vert G\right\vert ^{-1}\in R$. A module theoretic version
of the Maschke's Theorem is proven in \cite{Kosan}. This version states that
for a nonzero $R$--module $M$ and a group $G$, $MG$ is a semisimple module
over $RG$ if and only if $M$ is a semisimple module and $G$ is a finite
group whose order is invertible in $End_{R}(M)$ that is all the $R$%
--endomorphisms of $M$. The purpose of this section is generalizing the
Maschke's Theorem to the $G$--set modules to give the criterion for the
semisimplicity of a $G$--set module.

\begin{lemma}
\label{semlem1}Let $M$ be a nonzero $R$--module, $G$ a group, $S$ a $G$%
--set. If $X\cap \triangle _{G}(MS)=0$ for some nonzero $RG$--submodule $X$
of $(MS)_{RG}$, then $G$ is a finite group.
\end{lemma}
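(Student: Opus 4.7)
The plan is to extract $G$-invariance of elements of $X$ from the intersection hypothesis, and then exploit the finite-support condition defining $MS$ to force $G$ to be finite.

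First I would pick any nonzero element $\mu=\sum_{s\in S}m_{s}s\in X$. For every $g\in G$, the element $g\mu-\mu$ lies in $X$ (since $X$ is an $RG$--submodule), and applying $\varepsilon_{MS}$ gives
\begin{equation*}
\varepsilon_{MS}(g\mu-\mu)=\varepsilon(g)\,\varepsilon_{MS}(\mu)-\varepsilon_{MS}(\mu)=\varepsilon_{MS}(\mu)-\varepsilon_{MS}(\mu)=0,
\end{equation*}
using the preceding lemma on $\varepsilon_{MS}$ together with $\varepsilon(g)=1$. Hence $g\mu-\mu\in X\cap\triangle_{G}(MS)=0$, which forces $g\mu=\mu$ for every $g\in G$.

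Next I would translate this $G$--invariance into information about the support of $\mu$. Comparing coefficients in $g\mu=\mu$ yields $m_{gs}=m_{s}$ for all $s\in S$ and all $g\in G$, so the coefficient function $s\mapsto m_{s}$ is constant on every $G$--orbit of $S$. Choose some $s_{0}$ in the (finite) support of $\mu$, so $m_{s_{0}}\neq 0$; then $m_{gs_{0}}=m_{s_{0}}\neq 0$ for every $g\in G$, meaning the entire orbit $Gs_{0}$ is contained in $\operatorname{supp}(\mu)$. Since $\operatorname{supp}(\mu)$ is finite by the very definition of $MS$, the orbit $Gs_{0}$ must be finite.

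The main obstacle, and the delicate step I expect to wrestle with, is upgrading this local finiteness (of the one orbit $Gs_{0}$) to global finiteness of $G$. The orbit--stabilizer relation only gives $[G:\operatorname{Stab}(s_{0})]<\infty$, not $|G|<\infty$; so the stabilizer could still be infinite. To close this gap I would use $M\neq 0$ together with $RG$--closure of $X$: applying arbitrary $r=\sum r_{g}g\in RG$ to $\mu$ produces $r\mu\in X$ with the same argument $r\mu-\varepsilon(r)\mu\in X\cap\triangle_{G}(MS)=0$, so $r\mu=\varepsilon(r)\mu$. This pins down the $RG$--orbit of $\mu$ extremely tightly, and I would leverage this (together with some $h\in\operatorname{Stab}(s_{0})$, which should then act trivially on the coefficient $m_{s_{0}}\neq 0\in M$ in a controlled way) to manufacture, under the assumption that $G$ is infinite, either a nonzero element of $X\cap\triangle_{G}(MS)$ or an element of $MS$ with infinite support --- both contradictions. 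This is the step whose details require the most care.
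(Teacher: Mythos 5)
Your first two steps are correct and cleanly executed: since $(g-1)\mu$ lies in $\triangle_{G}(MS)$ by its very definition (no appeal to $\varepsilon_{MS}$ is needed), the hypothesis forces $g\mu=\mu$ for every $g\in G$ and every $\mu\in X$, hence the coefficients of any nonzero $\mu\in X$ are constant on orbits, its finite support is a union of orbits, and every such orbit is finite. The genuine gap is exactly where you locate it, and the strategy you sketch cannot close it. The relation $r\mu=\varepsilon(r)\mu$ for $r\in RG$ is just the $R$--linear span of the relations $g\mu=\mu$, so it carries no new information; and the \emph{only} consequence of $X\cap\triangle_{G}(MS)=0$ is this $G$--invariance of the elements of $X$, which is perfectly compatible with $G$ being infinite whenever points of $S$ have infinite stabilizers. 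Concretely, if an infinite group $G$ acts trivially on a one--point set $S$, then $\triangle_{G}(MS)=0$, so $X=MS\neq 0$ satisfies the hypothesis while $G$ is infinite; the same happens for any faithful action possessing a fixed point. So no amount of care in your last step will manufacture the contradiction: orbit--stabilizer gives $|Gs_{0}|=[G:\operatorname{Stab}(s_{0})]<\infty$ and nothing more, and an extra hypothesis (freeness of the action on the support of $X$, as in the motivating case $S=G$ with the regular action, where your argument already finishes) is needed to upgrade this to $|G|<\infty$.

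For comparison, the paper argues by assuming $G$ infinite, taking a nonzero $x=m_{1}s_{1}+\dots+m_{k}s_{k}\in X$, and asserting the existence of $g\in G$ with $gs_{i}\neq s_{j}$ for all $i,j$, so that $(1-g)x$ is a nonzero element of $X\cap\triangle_{G}(MS)$. That existence assertion is precisely the point your analysis exposes: the set of $g$ with $gs_{i}=s_{j}$ for some $i,j$ is a union of at most $k^{2}$ cosets of stabilizers, and since (by your own step) the support of $x$ contains the whole orbit $Gs_{1}$, one always has $gs_{1}=s_{j}$ for some $j$ --- so the required $g$ never exists unless additional assumptions on the action are imposed. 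Your instinct that this is the delicate step is therefore right; the honest resolutions are either to add a freeness hypothesis or to restrict to $S=G$, in which case your orbit argument gives the shortest proof.
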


\begin{proof}
Firstly, we know that $\triangle _{G}(MS)$ is an $RG$--submodule of $%
(MS)_{RG}$. Assume that $G$ is an infinite group. Then for any $0\neq
x=m_{1}s_{1}+...+m_{k}s_{k}\in X$ where $s_{1},...,s_{k}\in S$ are distinct
and $m_{i}s_{i}\neq 0$, there is an element $g$ of $G$ such that $s_{i}g\neq
s_{j}$ for $1\leq i\leq k$. Hence, $(1-g)x=\sum\limits_{s_{i}\in
S}m_{i}s_{i}-\sum\limits_{s_{i}\in S}m_{i}gs_{i}\neq 0$, and also $(1-g)x\in
Y$ . On the other hand, $0\neq (1-g)x=\sum\limits_{s_{i}\in
S}m_{i}(s_{i}-1)-\sum\limits_{s_{i}\in S}m_{i}(gs_{i}-1)\in \triangle
_{G}(MS)$. Then, $X\cap \triangle _{G}(MS)\neq 0$ and this is a
contradiction.
\end{proof}

We recall the following lemma in \cite{Lam}, and also in \cite{Kosan}.

\begin{lemma}
\label{semlem2}\cite{Lam}\cite{Kosan}Let $X\leq Y$ be right $RG$--modules
and $G$ be a finite group whose order is invertible in $End_{R}(V)$. If $X$
is a direct summand of $Y$ as $R$--modules, then $X$ is a direct summand of $%
Y$ as $RG$--modules.
\end{lemma}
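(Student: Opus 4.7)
The plan is to run the classical Maschke-style averaging argument. Since $X$ is a direct summand of $Y$ as $R$-modules, I would first fix an $R$-linear retraction $\pi \colon Y \to X$, that is, an $R$-homomorphism with $\pi|_X = \mathrm{id}_X$. In general $\pi$ will not be $RG$-linear, and the task is to modify it into an $RG$-linear retraction $\widetilde{\pi} \colon Y \to X$; the kernel of such a map is then an $RG$-complement of $X$ in $Y$, which is exactly what the lemma asserts.

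The modification to use is the average
$$\widetilde{\pi}(y) \;=\; |G|^{-1} \sum_{g \in G} \pi(y g^{-1})\, g,$$
where the scalar $|G|^{-1}$ is legitimate because $|G|$ is, by hypothesis, invertible in the relevant endomorphism ring. The image still lies in $X$ since $X$ is $RG$-stable. The verifications are routine: $\widetilde{\pi}$ is $R$-linear because $\pi$ is; for $x \in X$ each summand collapses to $x g^{-1} g = x$, so $\widetilde{\pi}(x) = x$; and the reindexing $g \mapsto gh$ yields $\widetilde{\pi}(yh) = \widetilde{\pi}(y)\, h$ for every $h \in G$. Thus $\widetilde{\pi}$ is an $RG$-linear retraction of $Y$ onto $X$, and consequently $Y = X \oplus \ker \widetilde{\pi}$ as $RG$-modules.

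The only point that demands care is the interpretation of the invertibility hypothesis: one needs $|G|$, regarded as a scalar $R$-endomorphism of $Y$ (equivalently of $X$), to be invertible, which is precisely what $|G|^{-1} \in \mathrm{End}_R(V)$ is meant to encode. Granting this, the averaging formula is nothing more than arithmetic in $\mathrm{End}_R(Y)$, and there is no substantive obstacle beyond bookkeeping. The argument is the standard Maschke proof transported verbatim to the present context, which is why the authors simply cite \cite{Lam} and \cite{Kosan}.
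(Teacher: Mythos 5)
Your averaging argument is correct and is exactly the standard Maschke-type proof that the cited sources (Lam; Kosan--Lee--Zhou) give for this lemma; the paper itself supplies no proof, only the citation. The one point worth noting is the one you already flag: the hypothesis $\left\vert G\right\vert ^{-1}\in End_{R}(V)$ (the paper's $V$ is a typo for the ambient module) must be read as saying that multiplication by $\left\vert G\right\vert$ is an invertible $R$-endomorphism of $Y$ restricting to one of $X$, which is what legitimizes the scalar $\left\vert G\right\vert ^{-1}$ in your formula.
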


\begin{theorem}
Let $M$ be a nonzero $R$--module, $G$ a group, $S$ a $G$--set. Then, $MS$ is
a semisimple module over $RG$ if and only if $M$ is a semisimple $R$%
--module, $G$ is a finite group whose order is invertible in $End_{R}(M)$ ($%
\left\vert G\right\vert ^{-1}\in End_{R}(M)$).
\end{theorem}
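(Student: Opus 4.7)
The plan is to establish both implications by invoking the structural results on $G$-set modules from the preceding sections and, for the backward direction, reducing to the group-module version of Maschke's theorem from \cite{Kosan}.

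For the forward direction, assume $(MS)_{RG}$ is semisimple. I first address finiteness of $G$: since $M \neq 0$ and $\varepsilon_{MS}$ is surjective, the kernel $\triangle_G(MS)$ is a proper $RG$-submodule of $MS$, so semisimplicity produces a nonzero $RG$-complement $X$ with $X \cap \triangle_G(MS) = 0$, and Lemma \ref{semlem1} forces $G$ to be finite. Next, pick any simple $RG$-submodule $L \leq MS$ (one exists since $MS \neq 0$ is semisimple); Theorem \ref{assoc} writes $L = NGs$ with $N$ a simple $R$-submodule of $M$, and Lemma \ref{simplems} then yields $|G|^{-1} \in End_R(M)$.

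To finish the forward direction I show $M_R$ is semisimple. Decompose $MS = \bigoplus_{\alpha} L_\alpha$ into simple $RG$-submodules, each of the form $L_\alpha = N_\alpha G s_\alpha$ by Theorem \ref{assoc}. Fix any $s_0 \in S$ and consider the $R$-linear coordinate projection $p_{s_0}\colon MS \to M$ sending $\sum_{s\in S} m_s s \mapsto m_{s_0}$, which is surjective because $p_{s_0}(m s_0) = m$. Hence $M = p_{s_0}(MS) = \sum_{\alpha} p_{s_0}(L_\alpha)$, and for each $\alpha$ the image $p_{s_0}(L_\alpha)$ is either $0$ (if $s_0 \notin Gs_\alpha$) or equal to the simple $R$-module $N_\alpha$ (if $s_0 \in Gs_\alpha$). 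Thus $M$ is a sum of simple $R$-submodules, so $M_R$ is semisimple.

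For the reverse direction, assume $M_R$ is semisimple, $G$ is finite, and $|G|^{-1} \in End_R(M)$. The group-module version of Maschke's theorem in \cite{Kosan} then gives that $(MG)_{RG}$ is semisimple. By the theorem immediately preceding this one, $MS \simeq \bigoplus_{i \in I} MG/\ker \theta_i$ as $RG$-modules, where each $\theta_i\colon MG \to MGs_i$ is an $RG$-epimorphism. Each summand is an $RG$-quotient of a semisimple module and hence semisimple, so their direct sum $MS$ is semisimple over $RG$.

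The subtlest point is the forward direction's proof that $M_R$ is semisimple: every $RG$-simple submodule of $MS$ lives ``smeared across an orbit'' as $NGs$, and without Theorem \ref{assoc} pinning down the simple $R$-submodule $N$ inside $M$, we would have no direct grip on the $R$-module structure of $M$ itself. The coordinate projection $p_{s_0}$ is precisely what converts the $RG$-level semisimple decomposition into a sum of simple $R$-submodules of $M$.
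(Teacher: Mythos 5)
Your backward direction and your finiteness argument for $G$ are fine: the latter is exactly the paper's argument via Lemma \ref{semlem1}, and the former (reducing to the semisimplicity of $(MG)_{RG}$ from \cite{Kosan} and writing $MS\simeq\bigoplus_{i}MG/\ker\theta_{i}$, so that $MS$ is a direct sum of quotients of a semisimple module) is a legitimate alternative to the paper's route, which instead observes that $(MS)_{R}$ is semisimple and upgrades $R$--direct summands to $RG$--direct summands via Lemma \ref{semlem2}. The genuine gap is in your derivation of $\left\vert G\right\vert ^{-1}\in End_{R}(M)$. You obtain it by citing Lemma \ref{simplems}, but the proof of that lemma establishes only that $N$ is a simple $R$--submodule; the clause about $\left\vert G\right\vert$ being invertible in $End_{R}(M)$ is asserted in its statement and never argued, and it cannot follow from the stated hypothesis alone: if $Gs_{i}$ is a one-point orbit, then $NGs_{i}\simeq N$ carries the trivial $G$--action and is a simple $RG$--module whenever $N$ is a simple $R$--module, with no constraint whatsoever on $\left\vert G\right\vert$. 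So your proposal relocates the hardest part of the theorem into an unproved (and, as stated, unusable) lemma. The paper supplies a substantive argument at this point which your proposal omits entirely: assuming $p^{-1}\notin End_{R}(M)$ for a prime $p$ dividing $\left\vert G\right\vert$, it uses semisimplicity of $M_{R}$ to produce a nonzero direct summand $N$ with $pN=0$, shows that $\triangle _{G}(NS)$ is essential in $(NS)_{RG}$, and derives the contradiction $N=\varepsilon _{MS}(NS)=\varepsilon _{MS}(\triangle _{G}(NS))=0$. You need some version of that argument (or another genuine one) for this implication.

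A secondary remark: your proof that $M_{R}$ is semisimple routes everything through Theorem \ref{assoc} (every simple $RG$--submodule of $MS$ has the form $NGs$), after which the coordinate projection $p_{s_{0}}$ neatly converts the $RG$--decomposition into a sum of simple $R$--submodules of $M$. That is an attractive idea, but it makes the step only as strong as Theorem \ref{assoc}, whose own proof is delicate. The paper's argument avoids this dependency: for an arbitrary $R$--submodule $N\leq M$, semisimplicity of $(MS)_{RG}$ gives an idempotent $\alpha \in End_{RG}(MS)$ with image $NS$, and then $\gamma =\varepsilon _{MS}\circ \alpha \mid _{M}$ is an idempotent in $End_{R}(M)$ with image $N$, so every $N$ is a direct summand of $M$. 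That direct argument is more elementary and self-contained, and I would recommend adopting it.
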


\begin{proof}
Assume that $M$ is a semisimple $R$--module, $G$ is a finite group whose
order is invertible in $End_{R}(M)$. Let $Y$ be an $RG$--submodule of $MS$.
Firstly, $(MS)_{R}$ is semisimple since $M_{R}$ is semisimple. Hence, $Y_{R}$
is a direct summand of $(MS)_{R}$. Moreover, $\left\vert G\right\vert
^{-1}\in End_{R}(MS)$ since $G$ is finite and $\left\vert G\right\vert
^{-1}\in End_{R}(M)$. So, $Y_{RG}$ is a direct summand of $(MS)_{RG}$ by
Lemma\ref{semlem2} that means $(MS)_{RG}$ is semisimple.

Assume that $MS$ is a semisimple module over $RG$. $\triangle _{G}(MS)$ is
an $RG$--submodule of $MS$ and we know that $\triangle _{G}(MS)\neq MS$. So, 
$\triangle _{G}(MS)$ is a proper direct summand of $(MS)_{RG}$. Hence, $G$
is a finite group by Lemma \ref{semlem1}.

Let $N$ be an $R$--submodule of $M$. Then, $(NS)_{RG}$ is an $RG$--submodule
of $(MS)_{RG}$. $(NS)_{RG}$ is a direct summand of $(MS)_{RG}$ because $%
(MS)_{RG}$ is semisimple, so there is $\alpha ^{2}=\alpha \in End_{RG}(MS)$
such that $NS=\alpha (MS)$. Let $\alpha \mid _{M}$ be the restriction of $%
\alpha $. Consider the composition such that $\gamma :M\overset{\alpha \mid
_{M}}{\longrightarrow }MS\overset{\varepsilon _{MS}}{\longrightarrow }M$,
and so $\gamma \in End_{R}(M)$. It is clear that $\gamma (M)\subseteq N$.
For any $z\in N$, write $z=\alpha (y)$ where $y\in MG$. Then $\gamma
(z)=\varepsilon _{MS}\alpha (\alpha (y))=\varepsilon _{MS}\alpha
(y)=\varepsilon _{MS}(z)=z$. Hence, $N=\gamma (M)$, $\gamma (\gamma
(z))=\gamma (z)=z$ and $\gamma ^{2}=\gamma $ which means that $N$ is a
direct summand of $M$. Therefore, $M_{R}$ is semisimple $R$--module.

Assume that $\left\vert G\right\vert ^{-1}\notin End_{R}(M)$. Then there is
a prime divisor $p$ of $\left\vert G\right\vert $ such that $p^{-1}$ $\notin
End_{R}(M)$. We prove that $p:M\longrightarrow M$ is not one-to-one. Indeed,
if $p:M\longrightarrow M$ is one-to-one, then $pM\neq M$ because $p^{-1}$ $%
\notin End_{R}(M)$. $M=pM\oplus Z$ for some nonzero $R$--submodule $Z$ of $M$
because $M_{R}$ is semisimple. Since $pM\cap Z=0$, we get $pZ=0$. Thus, $%
p:M\longrightarrow M$ is not one-to-one. So, there exists a nonzero direct
summand $N$ of $M_{R}$ such that $pN=0$ because $M_{R}$ is semisimple.

Now consider $N\hat{G}$ that is an $RG$--submodule of $(MS)_{RG}$ and $N\hat{%
G}\subseteq \triangle _{G}(NS)$ since $\left\vert G\right\vert N=0$. We
claim that $\triangle _{G}(NS)$ is an essential $\ RG$--submodule of $%
(NS)_{RG}$. Let $\sum\limits_{s\in S}n_{s}s\in NS\backslash \triangle
_{G}(NS)$. Then, $0\neq \sum\limits_{s\in S}n_{s}\in N$, and thus $%
(\sum\limits_{s\in S}n_{s}s)\hat{G}=(\sum\limits_{s\in S}n_{s})\hat{G}$ is a
nonzero element of $\triangle _{G}(NS)$. So $\triangle _{G}(NS)$ is an
essential $RG$--submodule of $(NS)_{RG}$. Since $MS$ is a semisimple module
over $RG$ by hypothesis and $(NS)_{RG}$ is submodule of $(MS)_{RG}$, $%
(NS)_{RG}$ is semisimple $RG$--module. Hence, $NS=\triangle _{G}(NS)$, and
so $0=\varepsilon _{MS}(\triangle _{G}(NS))=\varepsilon _{MS}(NS)=N$. This
is a contradiction. So, $\left\vert G\right\vert ^{-1}\in End_{R}(MS)$.
\end{proof}

\textbf{Acknowledgement}

The second author was supported by the Scientific Research Project
Administration of Akdeniz University.


\begin{thebibliography}{99}
\bibitem{AF} Anderson, F.W., Fuller, K.R.: Rings and Categories of Modules.
Springer-Verlag, New York (1992).

\bibitem{Alp} Alperin, J.L., Rowen, B.B.: Groups and Representation.
Springer-Verlag, New York (1995).

\bibitem{Aus} Auslander, M.: On regular group rings. Proc. Am. Math. Soc. 8,
658--664 (1957).

\bibitem{Con} Connell, I.G.: On the group ring. Canadian J. Math. 15,
650--685 (1963).

\bibitem{Curt3} Curtis, C.W., Reiner I.: Methods of Representation Theory
Vol. 2. Wiley-Interscience, New York (1987).

\bibitem{Curt2} Curtis, C.W., Reiner I.: Methods of Representation Theory:
With Applications to Finite Groups and Orders\textit{\ Vol. 1}.
Wiley-Interscience, New York (1990).

\bibitem{Karp} Karpilovsky G.: Commutative Group Algebras. Marcel Decker,
New York (1983).

\bibitem{Karp2} Karpilovsky G.:\textit{\ }Group and Semigroup Rings.
North-Holland, Amsterdam (1986).

\bibitem{Kosan} Kosan, M. T., Lee T., Zhou Y.: On modules over group rings.
Algebras and Representation Theory 17\textbf{\ }(1), \textbf{\ }87-102
(2014).

\bibitem{Lam} Lam, T.Y.: A First Course in Noncommutative Rings, 2nd edn.
Grad. Texts Math. 131. Springer, New York (2001)

\bibitem{Sehgal} Milies C. P., Sehgal, S. K.: An Introduction to Group
Rings. Kluwer Academic Publishers, Dordrecht, The Nedherlands (2002).

\bibitem{Pass} Passmann, D.S.: The Algebraic Structure of Group Rings. Dover
Publications, Inc., New York\textit{\ }(2011).

\bibitem{passii} Passi, I.B.S.: Group Rings and Their Augmentation Ideals.
Springer-Verlag, Berlin, Heidelberg (1979).

\bibitem{uconesalkan} Uc, M., Ones, O., Alkan, M.: On modules over groups. 
\textit{Filomat} 30:4, 1021--1027 (2016).

\bibitem{ucalkan} Uc, M., Alkan, M.: On submodule characterization and
decomposition of modules over group rings, AIP, (2016) (in press).
\end{thebibliography}
\end{document}